\newcommand{\C}{\mathbb{C}}
\newcommand{\A}{\mathbb{A}}
\newcommand{\Ps}{\mathbb{P}}
\newcommand{\Q}{\mathbb{Q}}
\newcommand{\Z}{\mathbb{Z}}
\newcommand{\R}{\mathbb{R}}
\newcommand{\F}{\mathbb{F}}
\DeclareMathOperator{\Gal}{Gal}
\DeclareMathOperator{\ev}{ev}
\newcommand{\Mat}{\mathrm{Mat}}
\newcommand{\gen}[1]{\langle{#1}\rangle}
\newcommand{\pceil}[1]{\lceil{#1}\rceil}
\newcommand{\limplies}{\Longleftarrow}
\theoremstyle{plain}
\newtheorem{theorem}{Theorem}[section]
\newtheorem*{theorem-a}{Theorem A}
\newtheorem*{theorem-b}{Theorem B}
\newtheorem{corollary}[theorem]{Corollary}
\newtheorem{lemma}[theorem]{Lemma}
\newtheorem{proposition}[theorem]{Proposition}
\newtheorem*{acknowledgments}{Acknowledgments}
\theoremstyle{definition}
\newtheorem{definition}[theorem]{Definition}
\newtheorem{remark}[theorem]{Remark}
\newtheorem{example}[theorem]{Example}
\newtheorem{problem}{Problem}
\begin{document}

\title[Slopes of $\Z_p$-covers]
{On slopes of $L$-functions of $\Z_p$-covers over the projective line}

\author{Michiel Kosters}
\address{Department of Mathematics, University of California, Irvine, CA 92697-3875, USA}
\email{kosters@gmail.com}

\author{Hui June Zhu}
\address{Department of Mathematics, 
State University of New York at Buffalo, NY 14260-2900, USA}
\email{hjzhu@math.buffalo.edu}

\date{\today}

\subjclass[2010]{11T23 (primary), 	11L07, 13F35, 11R58.}
\keywords{}
\maketitle

\begin{abstract}
Let $\mathcal{P}: \cdots \rightarrow C_2\rightarrow C_1\rightarrow {\mathbb P}^1$ be a $\Z_p$-cover of the projective line over a finite field of cardinality $q$ and characteristic $p$ which ramifies at exactly one rational point. We study the $q$-adic valuations of the reciprocal roots  in $\C_p$ of $L$-functions associated to characters of the Galois group of $\mathcal{P}$.
We show that for all covers $\mathcal{P}$ such that the genus of $C_n$ is a quadratic polynomial in $p^n$ for $n$ large, the valuations of these reciprocal roots are uniformly distributed in the interval $[0,1]$. Furthermore, we show that for a large class of such covers $\mathcal{P}$, the valuations of the reciprocal roots in fact form a finite union of arithmetic progressions. 
\end{abstract}

\tableofcontents

\section{Introduction}

Let $k=\F_q$ be a finite field of cardinality $q=p^a$ with $p$ prime.
Let $C_0=\Ps^1_k$ be the projective line over $k$. Let $\mathcal{P}: \cdots \to C_2 \to C_1 \to C_0$ be a $\Z_p$-cover of smooth projective geometrically irreducible curves over $k$. This means that $\Gal(C_i /C_0) \cong \Z/p^i\Z$. We identify $\Gal(\mathcal{P}/C_0) $ with $\Z_p$. In this paper we always assume that the tower is totally ramified at one rational point, and is unramified at other points. Without loss of generality, we assume this point to be $\infty$.
By $|\Ps^1_k|$ we denote the set of closed points of the projective line over $k$. For $x \in |\Ps^1_k| \setminus \{\infty\}=|\A^1_k|$ we denote by $\mathrm{Frob}(x) \in \Z_p$ its Frobenius element. 

Let $\overline{\Z}_p$ be the ring of integers of $\overline{\Q}_p$.
Let $\chi: \Z_p \to \C_p^*$ be a non-trivial character of conductor (order) $p^{m_{\chi}}$ for some $m_\chi\geq 1$. We consider the $L$-function
\begin{align*}
L(\chi,s)= \prod_{x \in |\A^1_k|} \frac{1}{1- \chi(\mathrm{Frob}(x)) s^{\deg(x)}} \in 1+s \overline{\Z}_p[[s]] \subset  \C_p [[s]].
\end{align*}
By the Weil Conjectures, $L(\chi,s)$ is a polynomial (\cite[Theorem A]{BOM}).
We can write the polynomial $L(\chi,s)$ as
\begin{align*}
L(\chi,s) = \prod_{i=1}^{\deg(L(\chi,s))} (1- \alpha_{\chi,i} s),
\end{align*}
where $\alpha_{\chi,i} \in \overline{\Z}_p$. We let $v_q$ be the valuation on $\C_p$ normalized by $v_q(q)=1$. By the Weil conjectures, one has $0 \leq v_q(\alpha_{\chi,i}) \leq 1$ and the leading term of $L(\chi,s)$ has $q$-adic valuation $\deg(L(\chi,s))/2$  (by the functional equation the $\alpha_{\chi,i}$ come in pairs which multiply to $q$; \cite[Theorem A]{BOM}). We would like to understand the distribution of the $v_q(\alpha_{\chi,i})$ in the interval $[0,1]$. The multiset of all $v_q(\alpha_{\chi,i})$  consists of precisely the {\em slopes} of the $q$-adic Newton polygon of the polynomial $L(\chi,s)$. Given two characters $\chi, \chi'$ with $m_{\chi}=m_{\chi'}$, the multiset of all $v_q(\alpha_{\chi,i})$ is equal to the multiset of all $v_q(\alpha_{\chi',i})$ by Galois theory. 

 
We say that $\mathcal{P}$ is \emph{genus stable}  if 
the genus $g(C_n)$ of $C_n$ is quadratic in 
$p^n$ for $n$ large enough.
In general $g(C_n)\geq h(p^n)$ for some quadratic polynomial $h(-)$ for $n$ large enough. In fact, for any function $h: \Z_{\geq 1} \to \R$, there exists a $\Z_p$-cover $\mathcal{P}$ with $g(C_n) \geq h(n)$ for $n \geq 1$. In other words, the genus of $C_n$ can grow arbitrarily fast. If $\mathcal{P}$ is not genus stable, not much is currently known on the behavior of the $v_q(\alpha_{\chi,i})$.
On the other hand, one can show that $\mathcal{P}$ is genus stable if and only if $\deg(L(\chi,s))$ is a linear polynomial in $p^{m_{\chi}}$ for large enough $m_{\chi}$ (see \cite[Proposition 5.5]{WAN8} for a proof and further discussions). 

We say that the tower $\mathcal{P}$ is \emph{slope uniform} if the following holds: For every interval $[a,b] \subseteq [0,1]$ one has
\begin{align*}
\lim_{m_{\chi} \to \infty}    \frac{\# \{ i = 1,2, \ldots, \deg(L(\chi,s)):\ v_q(\alpha_{\chi,i}) \in [a,b] \}}{\deg(L(\chi,s))} = b-a
\end{align*}
where the limit is over any finite non-trivial character $\chi$
of conductor $p^{m_\chi}$. 
Intuitively, slope uniformity means that the $q$-adic valuations of the $\alpha_{\chi,i}$ for a character $\chi$ with conductor $p^{m_{\chi}}$ approach a uniform distribution on $[0,1]$ when $m_{\chi}$ goes to infinity.
 Our first main result is the following, which follows from 
Theorem \ref{444}.

\begin{theorem-a} \label{sta}
Assume that $\mathcal{P}$ is genus stable. Then $\mathcal{P}$ is slope uniform.
\end{theorem-a}

We say that $\mathcal{P}$ is \emph{slope stable} if there exists an integer $m' \in \Z_{\geq 1}$ such that the following holds. Let $\chi_0$ be a character with $m_{\chi_0}=m'$. Then for every character $\chi$ with $m_{\chi} \geq m'$ the multiset $\{v_q(\alpha_{\chi,i}): i=1,2,\ldots,\deg(L(\chi,s)) \} $ is equal to the multiset
\begin{align*}
\bigcup_{i=0}^{p^{m_{\chi}-m'}-1} \left\{  \frac{i}{p^{m_{\chi}-m'}}, \frac{v_q(\alpha_{\chi_0,1})+i}{p^{m_{\chi}-m'}},  \frac{v_q(\alpha_{\chi_0,2})+i}{p^{m_{\chi}-m'}},  \ldots, \frac{v_q(\alpha_{\chi_0,\mathrm{deg}(L(\chi_0,s))}) +i}{p^{m_{\chi}-m'} }  \right\} - \{0\}.
\end{align*} 
In the literature (see \cite{WAN7} for example), {\em slope stable} is commonly described as {\em forming a finite union of arithmetic progressions}.
In the rest of the paper we write {\em $m'$-slope stable} when we want to emphasize $m'$.

We observe that slope stable implies genus stable. Indeed, if $\mathcal{P}$ is slope stable, 
then  for $m_{\chi} \geq m_{\chi_0}$ the degree $\deg(L(\chi,s))=(\deg L(\chi_0,s)+1)p^{m_\chi-m_{\chi_0}}-1$ is 
a linear polynomial in $p^{m_{\chi}}$, and as discussed above, this means that $\mathcal{P}$ is genus stable.

Any $\Z_p$-cover $\mathcal{P}$ can be explicitly given by 
$f_0,f_1,\ldots,\in\F_q[X]$ with $p \nmid \deg(f_i)$ 
using Witt-vector equations and Artin-Schreier-Witt theory (see Section \ref{ha4} for details).
The cover $\mathcal{P}$ is genus stable if and only if 
 $\delta=\delta(\mathcal{P})=\max\left\{ \frac{\deg(f_i)}{p^i}: i=0,1,\ldots\right\}$ 
exists (see Section \ref{ha4}). In that case, one has $\deg(f_i) \leq \delta p^i$ for all $i$. 
Our second major result is the following. 

\begin{theorem-b}  
Fix $\delta \in \Q_{>0}$. 
There is a constant $C=C(\delta,q) \in \R_{\geq 0}$ such that 
for any $\mathcal{P}$ genus stable with $\delta=\delta(\mathcal{P})$ and $\deg(f_i) \leq \delta p^i-C$ 
for all $i$ large enough,  $\mathcal{P}$ is slope stable. 
\end{theorem-b}

The proof of Theorem B can be found in Section \ref{slo}. Our $C$ in Theorem B is explicit and quite small (see Remark \ref{consta}). We do not know whether genus stability is equivalent to slope stability, in other words, whether we can take $C=0$ in Theorem B.

As an intermediate result, we also study how much the $L$-functions depend on the defining equations, the $f_i$, of the $\Z_p$-cover. We prove that many coefficients of the $f_i$ do not influence the Newton polygon of the various $L$-functions (Theorem \ref{om2}).

We will now discuss what new methods are used in this paper for studying $L$-functions of $\Z_p$-towers. 
First of all we recall a bit of the history of development in this area.
Liu and Wan (in \cite{WAN9}) introduce the $T$-adic $L$-function 
$L(T,s)$, that deforms the $p$-adic $L$-function $L(\chi,s)$.
Namely $L(\chi,s)$ is a specialization of $L(T,s)$ at $T=\chi(1)-1$.
The authors of \cite{WAN7} bounded the $T$-adic Newton polygon $L(T,s)$ 
by applying the lower and upper bound of $L(\chi,s)$ for $m_\chi=1$.
In \cite{LIX} X. Li improves \cite{WAN9} by taking into account both the $p$-adic and $T$-adic valuation of coefficients of $L(T,s)$.  
These techniques do not apply to the study of 
genus stable $\Z_p$-covers.

Instead working with $T$-adic and $p$-adic valuation, we
introduce a deformation of $L(T,s)$ (and subsequently of $L(\chi,s)$) in Section 4. We will call it the $\pi$-adic $L$-function $L(\pi,s)$ whose 
coefficients lie in a power series ring $R$ in infinitely many variables (see Section 4 and also Remark \ref{R:remark1}).
The new function $L(\pi,s)$ behaves much better than $L(T,s)$ since it incorporates special $p$-adic properties of elements of $p$-th power order in $\C_p^*$. Furthermore, we work with a topology on the base ring 
that reflects the structure of $L$-functions more intimately.

Finally we state below some open problems regarding $\Z_p$-covers of a curve over a finite field in characteristic $p$.

\begin{problem}
Is genus stability equivalent to slope stability?
\end{problem}

\begin{problem}
Let $\mathcal{P}$ be a non genus stable cover. Note that $\mathcal{P}$ is not slope stable. Is $\mathcal{P}$ slope uniform? It might be possible to apply our techniques when the genus of $C_n$ is bounded above by a quadratic polynomial in $p^n$ ({\em`almost genus stable'}). Almost genus stability is equivalent to saying that the set $\{\deg(f_i)/p^i: i=0,1,\ldots\}$ has a supremum. It seems that our theory has no handle on other cases.
\end{problem}

\begin{problem}
Set $K=k(X)$. There are various constructions in algebraic geometry, for example through \'etale cohomology, which give rise to continuous Galois representations
\begin{align*}
\Gal(K^{\mathrm{sep}}/K) \to \mathrm{GL}_1(\Z_p) \cong \Z_p^*.
\end{align*}
For $p=2$, one has $\Z_p^* \cong \Z/2\Z \times \Z_p$, and for $p>2$, one has $\Z_p^* \cong \F_p^* \times \Z_p$. 
Hence such representations give rise to continuous homomorphisms $\Gal(K^{\mathrm{sep}}/K) \to \Z_p$, which if non-trivial, correspond to $\Z_p$-covers of $\Ps^1_k$. Are these $\Z_p$-covers genus stable, slope uniform, or slope stable? It seems to be true in certain cases (see \cite{KRA}).
\end{problem}

\begin{problem}
Set $K=k(X)$. Let $\mathcal{P}$ be a $\Z_p$-cover given by $f \in W(K)$ as in Section \ref{ha4}. Then for $c \in \Z_{q}^*$, one can consider the `twisted' $\Z_p$-cover given by $cf$. Let $\chi$ be a non-trivial character of finite order. Are the $q$-adic Newton polygons of $L_f(\chi,s)$ and $L_{cf}(\chi,s)$ the same? 
A Witt-vector computation shows that this problem is related to the following problem. Consider the Witt-vector equation that defines 
the $\Z_{p^b}$-cover of ${\mathbb P}^1_k$:
$F^b y - y = f$ where $F$ is the Frobenius map and $b|a$. Does the $q$-adic Newton polygon of an $L$-function of a finite character $\chi: \Z_{p^b} \to \C_p^*$ of order $p^{m_{\chi}}>1$ only depend on $m_{\chi}$? Our result Theorem \ref{444} gives 
lower and upper bounds for the $q$-adic Newton polygons of both $L_f(\chi,s)$ and $L_{cf}(\chi,s)$. A similar result was first obtained by Ren, Wan, Xiao and Yu in \cite{REN}. However, it is unknown even for $\chi$ with $m_{\chi}=1$ if the $q$-adic Newton polygons of $L_f(\chi,s)$ and $L_{cf}(\chi,s)$ are the same.
\end{problem}

\begin{problem}
Find and implement an efficient algorithm for computing $L(\chi,s)$ or the $q$-adic Newton polygon of $L(\chi,s)$ for a $\Z_p$-cover and a finite non-trivial character $\chi$.
\end{problem}

\begin{problem}
Let $\mathcal{P}$ be a $\Z_p$-cover. What can one say about the complex numbers $\alpha_{\chi,i}$ when $m_{\chi} \to \infty$? One can write $\alpha_{\chi,i} = q^{1/2} e^{2 \pi i \theta_{\chi,i}}$  with $\theta_{\chi,i} \in \R/\Z$ for $i=1,\ldots,\deg(L(\chi,s))$. Is the multiset $\{\theta_{\chi,i}: i=1,\ldots, \deg(L(\chi,s))\}$ uniformly distributed in $\R/\Z$ when $m_{\chi} \to \infty$?
\end{problem}

\begin{acknowledgments}
We would like to thank Daqing Wan for introducing us to this topic, for his suggestions and for his interesting problems and conjectures. We would also like to thank the referee for his or her detailed comments and suggestions.
\end{acknowledgments}

\section{Artin-Hasse exponential}
Define the Artin-Hasse exponential by the formal power series
\begin{align*}
E(T) = \mathrm{exp} \left( \sum_{i=0}^{\infty} \frac{T^{p^i}}{p^i} \right) \in 1 + T + T^2 \Z_{(p)}[[T]]
\end{align*}
where $\Z_{(p)}$ is the localization of $\Z$ at the prime $p$. Let $S$ be either $\Z_{(p)}$, $\Z_p$, $\Z/p^n\Z$ or $\F_p$. We can view $E(T)$ as an element of $S[[T]]$. We let $v_T$ be the $T$-adic valuation on $S[[T]]$.

\begin{lemma} \label{saso}
For $i \in \Z_{\geq 1}$ the map
\begin{align*}
E(\cdot ): T^i S[[T]] \to 1 + T^i S[[T]]
\end{align*}
is a bijection, such that
$E(sT^i+ T^{i+1}S[[T]])=1 + s T^i + T^{i+1}S[[T]]$ for any $s\in S$.
In particular, $v_T(r)=v_T(E(r)-1)$ for any $r\in TS[[T]]$. 
\end{lemma}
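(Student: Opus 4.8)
The plan is to prove everything at once by establishing the finer statement: that $E(\cdot)$ carries each coset $sT^i + T^{i+1}S[[T]]$ into the coset $1 + sT^i + T^{i+1}S[[T]]$, and that this induced map on cosets is a bijection at every level. Granting this, the bijectivity of $E(\cdot)\colon T^iS[[T]] \to 1 + T^iS[[T]]$ follows by a standard successive-approximation (inverse limit) argument, and the last assertion $v_T(r) = v_T(E(r)-1)$ is just the $i = v_T(r)$ case read off from the coset statement.

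First I would record the basic structural fact about $E(T)$ itself: writing $E(T) = 1 + T + \sum_{j\geq 2} c_j T^j$ with $c_j \in \Z_{(p)}$, one has, for $r = sT^i + (\text{higher}) \in T^iS[[T]]$,
\begin{align*}
E(r) = 1 + r + \sum_{j \geq 2} c_j r^j = 1 + sT^i + T^{i+1}S[[T]],
\end{align*}
since $r \equiv sT^i \pmod{T^{i+1}}$ and every term $c_j r^j$ with $j \geq 2$ lies in $T^{2i}S[[T]] \subseteq T^{i+1}S[[T]]$ (here $i \geq 1$ is used). This already proves the displayed coset identity $E(sT^i + T^{i+1}S[[T]]) = 1 + sT^i + T^{i+1}S[[T]]$ and, in particular, that $E$ maps $T^iS[[T]]$ into $1 + T^iS[[T]]$ and respects the coset filtration with the linear term $r \mapsto 1+r$ as associated graded map.

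Next I would prove injectivity and surjectivity. For surjectivity: given $g = 1 + b_iT^i + b_{i+1}T^{i+1} + \cdots \in 1 + T^iS[[T]]$, I construct $r = \sum_{k \geq i} a_k T^k$ with $E(r) = g$ by solving for the $a_k$ one coefficient at a time. The coset computation above shows that the coefficient of $T^k$ in $E(\sum_{\ell} a_\ell T^\ell)$ equals $a_k$ plus a universal polynomial (with $\Z_{(p)}$-coefficients) in $a_i,\dots,a_{k-1}$; hence each $a_k$ is determined, and lies in $S$, by induction. For injectivity: if $E(r) = E(r')$ with $r \neq r'$, let $k$ be minimal with $a_k \neq a_k'$; comparing $T^k$-coefficients via the same recursion (the lower coefficients agree, so the "correction" polynomials agree) forces $a_k = a_k'$, a contradiction. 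Alternatively, injectivity plus surjectivity can be packaged as: $E(\cdot)$ induces a bijection on each successive quotient $T^iS[[T]]/T^{i+1}S[[T]] \to (1+T^iS[[T]])/(1+T^{i+1}S[[T]])$ (namely $sT^i \mapsto 1+sT^i$), and then one passes to the inverse limit over $i$ using completeness of $S[[T]]$ in the $T$-adic topology.

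I do not expect a genuine obstacle here; the only mild subtlety is bookkeeping of which coefficient rings $S$ are allowed, but since $E(T) \in \Z_{(p)}[[T]]$ and the recursion has coefficients in $\Z_{(p)}$, everything specializes uniformly to $S = \Z_{(p)}, \Z_p, \Z/p^n\Z, \F_p$ without change. The one point to be careful about is that for $S = \F_p$ (or $\Z/p^n\Z$) one should not invoke the usual exponential/logarithm inverse pair, since those have non-integral coefficients — which is precisely why the argument is run through the coefficient-by-coefficient recursion using only that $E(T) \in 1 + T + T^2\Z_{(p)}[[T]]$ rather than through $\log E$.
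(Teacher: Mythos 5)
Your proof is correct and follows essentially the same route as the paper: both exploit $E(T)\in 1+T+T^2\Z_{(p)}[[T]]$ and invert $E$ by successive approximation along the $T$-adic filtration (the paper refines an approximate preimage $h_j$ modulo $T^j$, you solve coefficient-by-coefficient, which is the same mechanism, with the coset identity and $v_T(r)=v_T(E(r)-1)$ read off as in the paper). The only nitpick is that your opening computation by itself gives the inclusion $E(sT^i+T^{i+1}S[[T]])\subseteq 1+sT^i+T^{i+1}S[[T]]$, the stated equality following only once surjectivity is in hand --- which you establish immediately afterward.
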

\begin{proof}
One easily sees that the maps are defined.  Let $s \in 1+ T^i S[[T]]$. We show by induction that for $j \geq i$ there is a $h_j \in T^iS[[T]]$, unique modulo $T^j S[[T]]]$, such that $E(h_j)-s \in T^j S[[T]]$. For $i=j$, the statement holds. Assume that the statement holds for $j$. Then there is $h_j \in T^iS[[T]]$, unique modulo $T^j S[[T]]$, with $E(h_j)-s-cT^{j} \in T^{j+1}S[[T]]$ for some $c \in S$. Then one has $E(h_j-d T^{j})-s-(c-d)T^j \in T^{j+1}S[[T]]$ and hence only for $c=d$ one has $E(h_j-dT^j)-s \in T^{j+1}S[[T]]$. Hence we have to set $h_{j+1}=h_j-cT^j$, modulo $T^{j+1}S[[T]]$. The first result follows. The proof of the second result is easy.
\end{proof}

Define $\pi_i(T)\in T \Z_p[[T]]$ for $i \in \Z_{\geq 0}$ by Lemma \ref{saso} as follows:
\begin{align*}
\pi_i(T) = E^{-1} \left( (1+T)^{p^i} \right).
\end{align*}
We let $v_p$ be the valuation on $\C_p$ with $v_p(p)=1$.
Let $x \in \C_p$ with $v_p(x)>0$. Then for all $i \geq 0$ one has
$v_p(\pi_i(x))>0$. On the other hand, 
\begin{equation}\label{equal}
v_p(\pi_i(x))=v_p(E(\pi_i(x))-1)=v_p((1+x)^{p^i}-1).
\end{equation}
Below we shall study $v_p(\pi_i(x))$ via this equality.

\begin{lemma} \label{200}
Let $n \in \Z_{\geq 1}$. Let $x=\zeta_{p^n}-1 \in \overline{\Q}_p$ where $\zeta_{p^n}$ is a primitive $p^n$-th root of unity.  Then one has  
\begin{align*}
v_p(\pi_i(x))=p^i v_p(x)
\end{align*}
 for $0 \leq i<n$ and $\pi_i(x)=0$ for $i \geq n$.  
\end{lemma}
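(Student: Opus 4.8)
The plan is to use equation~\eqref{equal}, which reduces the computation of $v_p(\pi_i(x))$ to understanding $v_p((1+x)^{p^i}-1)=v_p(\zeta_{p^n}^{p^i}-1)$. When $i \geq n$ we have $\zeta_{p^n}^{p^i}=1$, so $(1+x)^{p^i}-1=0$; since $E(\cdot)$ is a bijection sending $0$ to $1$ (Lemma~\ref{saso}, or just because $E(0)=1$), this forces $\pi_i(x)=0$ for $i \geq n$. So the substance is the range $0 \leq i < n$.

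For $0 \leq i < n$, the element $\zeta_{p^n}^{p^i}$ is a primitive $p^{n-i}$-th root of unity, so $\zeta_{p^n}^{p^i}-1$ is a uniformizer of $\Q_p(\zeta_{p^{n-i}})$, a totally ramified extension of $\Q_p$ of degree $\varphi(p^{n-i})=p^{n-i-1}(p-1)$. Hence $v_p(\zeta_{p^n}^{p^i}-1)=\frac{1}{p^{n-i-1}(p-1)}$. Applying this with $i=0$ gives $v_p(x)=v_p(\zeta_{p^n}-1)=\frac{1}{p^{n-1}(p-1)}$, and therefore $v_p(\zeta_{p^n}^{p^i}-1)=\frac{1}{p^{n-i-1}(p-1)}=p^i \cdot \frac{1}{p^{n-1}(p-1)}=p^i v_p(x)$. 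Combining with \eqref{equal} yields $v_p(\pi_i(x))=p^i v_p(x)$, as claimed.

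The only point that requires care is the standard ramification fact that $\zeta_{p^m}-1$ has valuation $\frac{1}{p^{m-1}(p-1)}$; this follows from the factorization of the $p^m$-th cyclotomic polynomial evaluated at $1$, namely $\prod_{\zeta}( 1-\zeta)=\Phi_{p^m}(1)=p$ where the product runs over primitive $p^m$-th roots of unity, together with the fact that all such $1-\zeta$ are associates in $\Z_p[\zeta_{p^m}]$ (being related by units of the form $\frac{1-\zeta^j}{1-\zeta}$ with $p \nmid j$). I expect this to be the main (and only real) obstacle, though it is entirely classical; alternatively one can cite it directly. Everything else is a direct substitution into~\eqref{equal}.
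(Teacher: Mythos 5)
Your proposal is correct and follows essentially the same route as the paper: reduce via equation~\eqref{equal} to computing $v_p(\zeta_{p^n}^{p^i}-1)$, which vanishes identically for $i\geq n$ and equals $p^i v_p(\zeta_{p^n}-1)$ for $i<n$. The only difference is that you spell out the classical ramification fact $v_p(\zeta_{p^m}-1)=1/\varphi(p^m)$, which the paper simply invokes without proof.
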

\begin{proof}
For $0\leq i<n$,  we have $v_p(\zeta_{p^n}^{p^i}-1)=p^i v_p(\zeta_{p^n}-1)$;
hence by (\ref{equal}) we have $v_p(\pi_i(x))=v_p(\zeta_{p^n}^{p^i}-1)=p^iv_p(x)$.
For $i \geq n$, $E(\pi_i(x))=\zeta_{p^n}^{p^i}=1$, hence $\pi_i(x)=0$ by (\ref{equal}).
\end{proof}

\begin{lemma}
\label{summer}
For $i \in \Z_{\geq 0}$ one has
\begin{align*}
\pi_i(T)\in T^{p^i} (1+T\Z_p[[T]])+ pT\Z_p[[T]].
\end{align*}
Furthermore, if we write $\pi_i(T) = \sum_{j} b_{ij} T^j \in \Z_p[[T]]$, then for any $j \in \Z_{\geq 0}$ one has
\begin{align*}
 \lim_{i \to \infty} b_{ij} = 0.
\end{align*}
\end{lemma}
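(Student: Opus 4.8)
The plan is to prove the two assertions of Lemma~\ref{summer} separately, both by reducing to the defining relation $E(\pi_i(T)) = (1+T)^{p^i}$ and exploiting the explicit shape of the Artin-Hasse exponential together with Lemma~\ref{saso}.

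For the first claim, $\pi_i(T) \in T^{p^i}(1 + T\Z_p[[T]]) + pT\Z_p[[T]]$, I would work modulo $p$. Reducing $E(T) = \exp\left(\sum_{j \geq 0} T^{p^j}/p^j\right)$ modulo $p$, the higher terms $T^{p^j}/p^j$ for $j \geq 1$ contribute (after exponentiating and clearing denominators) and one obtains the classical fact that $E(T) \equiv \prod_{(m,p)=1}(1-T^m)^{-\mu(m)/m}$ or, more usefully here, that $E(T) \bmod p$ is a power series whose only nontrivial feature we need is that $E(T) \equiv 1 + T + \cdots \pmod p$ with the crucial property that $E(T)^{p} \equiv E(T^p) \pmod p$ follows from $(1+\cdots)^p \equiv (\text{Frobenius}) \pmod p$ applied to the exponential identity. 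Concretely, modulo $p$ we have $(1+T)^{p^i} \equiv (1+T^{p^i}) = E(T^{p^i}) \cdot (\text{unit} \equiv 1)$, and more precisely $(1+T)^{p^i} \equiv 1 + T^{p^i} \pmod p$. Since $E$ restricts to a bijection $T\Z_p[[T]] \to 1 + T\Z_p[[T]]$ compatible with reduction mod $p$ (by Lemma~\ref{saso} applied over $\F_p$), and since over $\F_p$ one computes $E^{-1}(1 + T^{p^i}) = T^{p^i} + (\text{higher order})$ by the leading-term statement of Lemma~\ref{saso}, we conclude $\pi_i(T) \equiv T^{p^i}(1 + T\F_p[[T]]) \pmod p$, which is exactly the assertion.

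For the second claim, $\lim_{i \to \infty} b_{ij} = 0$ for each fixed $j$, the idea is that modulo any fixed power $p^N$ and modulo $T^{J}$ for fixed $J$, the truncated polynomial $\pi_i(T) \bmod (p^N, T^J)$ is eventually constantly the tail-free approximation — but in fact I expect it stabilizes to something supported in degrees $\geq p^i$, hence $0$ in degrees $< p^i$ once $p^i \geq J$. More carefully: one shows by induction on $N$ that $\pi_i(T) \bmod p^N$ depends on $i$ only through the residue of $(1+T)^{p^i} \bmod p^N$, and that as $i \to \infty$ the power series $(1+T)^{p^i} \bmod (p^N, T^J)$ converges — indeed, since $p^i \binom{p^i}{1}^{-1}\cdots$ type coefficients $\binom{p^i}{m}$ for fixed $m$ satisfy $v_p\binom{p^i}{m} = i - v_p(m!) \to \infty$, we get $(1+T)^{p^i} \to 1 \bmod (p^N, T^J)$ for $i$ large. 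Applying the mod-$p^N$ reduction of the bijection from Lemma~\ref{saso}, which sends $1$ to $0$, gives $\pi_i(T) \to 0 \bmod (p^N, T^J)$, i.e. $b_{ij} \to 0$ in $\Z_p$ for each fixed $j$.

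The main obstacle I anticipate is making the mod-$p$ analysis in the first part fully rigorous: one must verify carefully that $(1+T)^{p^i} \equiv 1 + T^{p^i} \pmod p$ in $\F_p[[T]]$ (which is the Frobenius, hence clean), and then that $E^{-1}$ of this, computed over $\F_p$ via Lemma~\ref{saso}, has leading term exactly $T^{p^i}$ and no nonzero terms in degrees strictly between $0$ and $p^i$ — this uses the precise leading-term statement $E(sT^{i}+\cdots) = 1 + sT^i + \cdots$ of Lemma~\ref{saso} iterated, together with the fact that $1 + T^{p^i}$ has no terms in degrees $1, \ldots, p^i - 1$. The second part is more routine once one has the convergence $(1+T)^{p^i} \to 1$ in the $(p, T)$-adic topology on $\Z_p[[T]]$ and the continuity of $E^{-1}$ for this topology (again a consequence of the successive-approximation construction in Lemma~\ref{saso}, which is visibly continuous).
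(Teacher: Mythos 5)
Your proposal is correct and follows essentially the same route as the paper: the first part is the mod-$p$ reduction of $E$ combined with $(1+T)^{p^i}\equiv 1+T^{p^i}\pmod p$ and the leading-term statement of Lemma~\ref{saso}, and the second part is the divisibility of $\binom{p^i}{m}$ together with the truncated bijection $E\colon T\cdot\Z/p^N\Z[[T]]\to 1+T\cdot\Z/p^N\Z[[T]]$ from Lemma~\ref{saso}, exactly as in the paper's proof. Only a cosmetic slip: the exact valuation is $v_p\binom{p^i}{m}=i-v_p(m)$, not $i-v_p(m!)$, though the needed conclusion that it tends to infinity for fixed $m$ is unaffected.
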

\begin{proof}
The following diagram commutes, where the vertical maps are the natural mod $p$ projections:
\begin{align*}
\xymatrix{ 
T \Z_p[[T]]  \ar[r]^{E(\cdot)} \ar@{->>}[d]^{\tau_1} & 1+T \Z_p[[T]] \ar@{->>}[d]^{\tau_2} \\
T \F_p[[T]] \ar[r]^{E(\cdot)} & 1+T \F_p [[T]].
 }
\end{align*}
Hence we find 
\begin{align*}
E(\tau_1(\pi_i(T)) =\tau_2( E(\pi_i(T)))= \tau_2\left( (1+T)^{p^i}\right)=1+T^{p^i}.
\end{align*}
From Lemma \ref{saso} one obtains $\tau_1(\pi_i(T)) \in  T^{p^i} (1+T \F_p[[T]])$. The result follows. 

For the second result, note that the $p$-adic valuation of ${p^i}\choose{j}$ goes to infinity if $i$ goes to infinity and $j$ is fixed. The result then follows by studying the map $E: T\cdot \Z/p^n\Z [[T]] \to 1 + T\cdot \Z/p^n\Z [[T]]$ and the expansion of $(1+T)^{p^i}$, and applying Lemma \ref{saso}. 
\end{proof}

\section{Genus stable $\Z_p$-covers}  \label{ha4}

Let $k=\F_q$ be a finite field of cardinality $q=p^a$. By $\F_{q^i}$ we denote the extension of $\F_q$ of degree $i$ in a fixed algebraic closure of $\F_q$. We let $\Z_q$ be the ring of integers of the unramified extension of $\Q_p$ with residue field $\F_q$. Let $K=k(X)$, the function field of $\Ps^1_k$. We are interested in studying $\Z_p$-covers of $\Ps_k^1$. Such a cover is also called an Artin-Schreier-Witt cover. 
We  restrict to $\Z_p$-covers which ramify only at $\infty$, and which are totally ramified at $\infty$. By $W(K)$ we denote the $p$-typical Witt-vectors of $K$. See \cite{WAN8} and \cite{WAN7} for more background material of this section.
We use the symbol $[\; ]$ to represent the Teichm\"uller map. Such a tower, together with an isomorphism of the Galois group with $\Z_p$, can be given (almost uniquely) by 
\begin{align*}
f= c_0 + \sum_{i\geq 1, (i,p)=1} c_i[X]^i \in W(K)
\end{align*}
with $c_i \in \Z_q$ and $v_p(c_i) \to \infty$ as $i \to \infty$ and $\min\{v_p(c_i): (i,p)=1\}=0$ (see \cite[Proposition 4.3]{WAN8}). Let $\mathcal{P}: \ldots \to C_2 \to C_1 \to C_0 = \Ps^1_k$ be the corresponding $\Z_p$-cover. More concretely, in terms of function fields one has $k(C_i)=K(y_0,y_1,\ldots,y_{i-1})$ where $y=(y_0,y_1,\ldots) \in W( \overline{K})$ is such that $(y_0^p,y_1^p,\ldots)-(y_0,y_1,\ldots)=f \in W(K)$ (see \cite{WAN8}).  We identify the Galois group of $\mathcal{P}$ and $\Z_p$ by the following isomorphism:
\begin{align*}
\Gal(\mathcal{P}/C_0)=\Gal(K(y_0,y_1,\ldots)/K) \to& \Z_p \\
\sigma \mapsto& \sigma y - y.
\end{align*}
For $x \in |\A^1_k|$ one has (\cite[Lemma 5.8]{WAN8})
\begin{align*}
\mathrm{Frob}(x) = \mathrm{Tr}_{\Z_{q^{\deg(x)}}/\Z_p}\left(  c_0 + \sum_{i\geq 1, (i,p)=1} c_i[x']^i   \right) \in \Z_p,
\end{align*}
where $x' \in \overline{k}$ is any representative of $x$.

The conductor of $C_{m'} \to C_0$ from class field theory satisfies (\cite[Proposition 4.14]{WAN8})
\begin{align*}
\mathfrak{f}(C_{m'} \to C_0)= \max\left\{1 +  ip^{m_{\chi}-v_p(c_i)-1}: i\geq 1, (i,p)=1,\ v_p(c_{i})<m' \right\} \cdot \infty.
\end{align*}
Let $\chi: \Z_p \to \C_p^*$ be a non-trivial finite character. By the Weil Conjectures
 (see \cite[Theorem A]{BOM} and \cite[Proposition 4.14]{WAN8})
$L(\chi,s)$ is a polynomial of degree
\begin{align*}
\deg(L(\chi,s))=-1 +  p^{m_\chi-1}\max\left\{ \frac{i}{p^{v_p(c_i)}}: i\geq 1, (i,p)=1,\ v_p(c_i)<m_{\chi} \right\}.
\end{align*}
We can represent $f$ in another way. There are unique polynomials
\begin{align*}
f_i=\sum_j a_{ij} X^j \in \F_q[X]
\end{align*}
such that
\begin{eqnarray}\label{E:a_ij}
f= \sum_{i=0}^{\infty} p^i \sum_j [a_{ij}] [X]^j.
\end{eqnarray}
Write $d_i=\deg(f_i)$. One has $a_{ij}=0$ if $j\in p\Z_{\geq 1}$ and $\deg(f_0) \geq 1$ and $p \nmid d_i$.
The tower $\mathcal{P}$ is genus stable if and only if the set
$
\left\{ \frac{d_i}{p^i}: i=0,1,\ldots\ \right\}
$
has a maximum $\delta$, and in that case there is a unique $m$ with 
$$
\delta = \frac{d_m}{p^m} = \max \left\{ \frac{d_i}{p^i}: i=0,1,\ldots\ \right\} .
$$
See \cite[Proposition 5.5]{WAN8} for a proof. \\
Assume from now on that $\mathcal{P}$ is genus stable with the above notation. One has $\deg(f_i) \leq \delta p^i$. Furthermore, for $m' >m$ one has
$$
\mathfrak{f}(C_{m'}/C_0)= (1+d_m p^{m'-m-1})\cdot \infty
$$
and for a character $\chi$ with $m_{\chi}>m$ one has
\begin{align*}
\deg(L(\chi,s))= d_m p^{m_{\chi}-m-1}-1 = \delta p^{m_{\chi}-1}-1  .
\end{align*}

\textbf{For the rest of the paper we assume that we are given a genus stable 
$\Z_p$-cover $\mathcal{P}$ of ${\mathbb{P}}^1_{\F_q}$ with $\delta=d_m/p^m$ and coefficients 
$a_{ij}$, as discussed in this section.}

\section{Extrapolating $L$-functions} \label{s:4}

\subsection{The $\pi$-adic valuation} \label{543}

Suppose we are given a genus stable $\Z_p$-cover $\mathcal{P}$ with $\delta(\mathcal{P})=\delta$ 
and defining coefficients $a_{ij}$ where $(i,j)$ lies in 
\begin{align*}
\mathfrak{X}= \left\{(i,j) \in  \Z_{\geq 0}^2: j \leq \delta p^i=d_m p^{i-m},\ j \not \in p \Z_{\geq 1} \right\}  \subset \Z_{\geq 0}^2.
\end{align*}

Consider the formal power series ring	
\begin{align*}	
R=\Z_p[[\pi_{x}: x \in \mathfrak{X}]].		
\end{align*}
We will put a valuation on this ring, similar to the valuation on the ring $\Q_p[[X]]$ which sends $cX^i(1+ X\Q_p[[X]])$ to $i$. 
Consider the monoid	
\begin{align*} U = \left\{u: \mathfrak{X} \to \Z_{\geq 0}: u(x)=0 \textrm{ for almost all }x\right\}.
\end{align*}
For each $u \in U$ we set		
\begin{align*}		
\pi^u = \prod_{x \in \mathfrak{X}} \pi_x^{u(x)}.		
\end{align*}		
Any $r \in R$ can now be written in the form $r=\sum_{u \in U} a_u \pi^u$ where $a_u \in \Z_p$. For such $r \neq 0$ we set
\begin{align*}
v(r) =\min\left\{ \frac{1}{\delta}  \sum_{(i,j) \in \mathfrak{X}}  j \cdot u\left((i,j)\right): u \in U,\ a_u \neq 0 \right\} \in \frac{1}{\delta} \Z_{\geq 0}
\end{align*}	
We set $v(0)=\infty$. Note that $v(\pi_{(i,j)})=\frac{j}{\delta}$ and $v(r)=0$ for $r \in \Z_p \setminus \{0\}$.

\begin{proposition}
The ring $R$ is a domain. We can extend $v$ to the quotient field $Q(R)$ of $R$ by
\begin{align*}
v: Q(R) \to& \frac{1}{\delta} \Z \sqcup \{\infty\} \\		
c/b \mapsto& v(c)-v(b).
\end{align*}		
Then $(Q(R),v)$ is a discrete valued field, that is, for $r, s \in Q(R)$ one has		
\begin{align*}		
v(r+s) \geq \min\{v(r), v(s)\} \hspace{0.5cm} \textrm{and} \hspace{0.5cm} v(rs) = v(r)+v(s).
\end{align*}
Furthermore, let $R''=\{x \in Q(R): v(x) \geq 0\}$ be the discrete valuation ring of $v$. Then $R \subsetneq R''$ and $R$ is complete with respect to $v$.
\end{proposition}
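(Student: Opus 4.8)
The plan is to verify each assertion in turn: (1) $R$ is a domain; (2) $v$ is well-defined on $R$ and satisfies the valuation axioms there; (3) $v$ extends to $Q(R)$; (4) $R \subsetneq R''$; and (5) $R$ is complete with respect to $v$. The first claim follows from a standard fact: a power series ring over a domain (here $\Z_p$) in any set of variables is a domain, since any nonzero element involves only finitely many variables and $\Z_p[[\pi_{x_1},\ldots,\pi_{x_n}]]$ is a domain by induction on $n$ (each step being $A[[\pi]]$ for a domain $A$).

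The key step is establishing the valuation axioms for $v$ on $R$, and the main subtlety is to check $v(rs) = v(r) + v(s)$ (submultiplicativity $v(rs) \ge v(r)+v(s)$ being immediate from the monoid structure of $U$ and the fact that the weight $u \mapsto \frac{1}{\delta}\sum_j j\, u((i,j))$ is additive). For equality, I would introduce on $U$ the total weight $w(u) = \sum_{(i,j)\in\mathfrak{X}} j \cdot u((i,j))$, so $v(r) = \frac{1}{\delta}\min\{w(u): a_u \ne 0\}$, and additionally break ties using a second term order on $U$ refining $w$ to a total monomial order. Writing $r = \sum a_u \pi^u$, $s = \sum c_{u'} \pi^{u'}$, pick the $w$-minimal $u_0$ (resp. $u'_0$) which is moreover minimal for the refined order among those with minimal $w$; then in $rs$ the monomial $\pi^{u_0 + u'_0}$ has coefficient $a_{u_0} c_{u'_0}$ plus a sum of terms $a_u c_{u'}$ with $u + u' = u_0 + u'_0$ but $(u,u') \ne (u_0, u'_0)$, and by the refined-order argument any such pair forces, say, $u < u_0$ in the order while $w(u) = w(u_0)$ — contradicting minimality; hence the coefficient of $\pi^{u_0+u'_0}$ is $a_{u_0}c_{u'_0} \ne 0$ since $\Z_p$ is a domain, giving $v(rs) \le v(r)+v(s)$. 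The ultrametric inequality $v(r+s) \ge \min\{v(r),v(s)\}$ is clear coefficientwise.

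Given the valuation axioms on $R$, the extension to $Q(R)$ by $v(c/b) = v(c) - v(b)$ is forced and well-defined (independence of the representative follows from multiplicativity on $R$), and the axioms on $Q(R)$ follow formally; that $v$ takes values in $\frac{1}{\delta}\Z \sqcup \{\infty\}$ is clear. For $R \subsetneq R''$: every element of $R$ has $v \ge 0$ since all weights $w(u)$ are nonnegative, so $R \subseteq R''$; and the inclusion is strict because one can exhibit an element of $Q(R)$ of valuation $0$ not in $R$ — for instance a quotient $\pi_{(i,j)} / \pi_{(i',j')}$ with $j = j'$ but $(i,j) \ne (i',j')$, which has valuation $0$ yet is not a power series in the $\pi_x$. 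Finally, completeness of $R$ with respect to $v$: a $v$-Cauchy sequence has, for each fixed weight level $n \in \frac{1}{\delta}\Z_{\ge 0}$, an eventually-constant truncation modulo $\{v \ge n\}$ — note that $\{u \in U : w(u) \le N\}$ need not be finite, but the coefficients $a_u$ live in $\Z_p$, which is itself complete, so one assembles the limit coefficientwise: for each $u$, the sequence of $u$-coefficients is $p$-adically Cauchy and converges in $\Z_p$, and the resulting formal sum lies in $R$ because for each fixed $u$ the coefficient is determined, and (using that $R = \Z_p[[\pi_x : x \in \mathfrak{X}]]$ already requires coefficients to be arbitrary in $\Z_p$ with no finiteness constraint beyond the power-series condition) this defines a genuine element of $R$ to which the sequence $v$-converges. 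The main obstacle is the multiplicativity argument: one must handle the infinitely many variables and the possibly infinite fibers $\{u : w(u) = n\}$ carefully, which is exactly what the refinement of $w$ to a total monomial order on $U$ is designed to circumvent.
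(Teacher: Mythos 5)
Your overall strategy is the same as the paper's: everything reduces to the claim that, for a monomial order refining the weight, the lowest monomial of $rs$ is the product of the lowest monomials of $r$ and $s$, which gives simultaneously that $R$ is a domain and that $v$ is multiplicative; the extension to $Q(R)$, the strict inclusion $R\subsetneq R''$ (your example is the paper's) and completeness are then handled as you do. However, there is a genuine flaw in the write-up. The assertion that a nonzero element of $R$ involves only finitely many variables is false: $R=\Z_p[[\pi_x:x\in\mathfrak{X}]]$ must here be the full power series ring, whose elements are arbitrary formal sums $\sum_{u\in U}a_u\pi^u$. Indeed, the union of the finite-variable subrings is not $v$-complete (the $v$-Cauchy sequence $\sum_{i\leq n}\pi_{(i,j_i)}$ with $j_i\to\infty$ has no limit there), and later in the paper the coefficients of $C^*(\pi,s)$ are written as infinite sums $\sum_{u:\,w(u)\geq t_i}s_u\pi^u$, which only make sense in the full ring. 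So your induction-on-the-number-of-variables argument for the domain property does not apply as stated; the domain property has to come out of the monomial-order argument, exactly as in the paper.

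This puts real pressure on your key step. Supports can be infinite even within a single weight fiber: for fixed $j$ there are infinitely many $(i,j)\in\mathfrak{X}$, and all the variables $\pi_{(i,0)}$ have weight $0$. Hence refining $w$ to a merely \emph{total} monomial order does not guarantee that the set of $w$-minimal monomials in the support of $r$ has a least element, which is precisely what your argument (and the paper's) uses. The naive lexicographic orders attached to an enumeration of $\mathfrak{X}$ are not well founded here: depending on the convention one gets infinite descending chains such as $\pi_{(0,0)}>\pi_{(1,0)}>\pi_{(2,0)}>\cdots$ or $\pi_{(0,0)}>\pi_{(0,0)}^2>\pi_{(0,0)}^3>\cdots$, and elements such as $\sum_{i\geq 0}\pi_{(i,0)}$ or $\sum_{k\geq 1}\pi_{(0,0)}^k$ lie in $R$ and realize these chains as weight-minimal supports. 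The repair is to break ties by a translation-invariant \emph{well}-order, e.g.\ compare two monomials at the largest variable (in a fixed enumeration $x_1,x_2,\ldots$ of $\mathfrak{X}$) where they differ, smaller exponent first; equivalently, send $u$ to the ordinal $\sum_i u(x_i)\,\omega^i$, which embeds $U$ additively and order-preservingly into the ordinals, so every nonempty subset of $U$ has a least element. With such an order your computation of the coefficient of $\pi^{u_0+u_0'}$ goes through verbatim and yields both $rs\neq 0$ and $v(rs)=v(r)+v(s)$. A minor further point: in the completeness argument the coefficients of a $v$-Cauchy sequence stabilize exactly (since $v(p)=0$, the valuation ignores $p$-adic size), so no appeal to completeness of $\Z_p$ is needed; the coefficientwise limit is immediate and lies in the full power series ring.
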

\begin{proof}
Let $r,s \in R$. One can easily see that 
 $v(r+s) \geq \min\{v(r), v(s)\}$.
We will show $v(rs)=v(r)+v(s)$. Assume that $r, s \neq 0$. 
Notice that $v(-)$ combined with a fixed well order on $\mathfrak{X}$ leads to a weighted lexicographic monomial order on the set of monomials of $R$. Then the lowest
monomial in $rs$ is the product of the lowest monomial in $r$ and the lowest monomial in $s$. Hence $rs \neq 0$ and $v(rs)=v(r)+v(s)$. 

Extend the definition of $v$ to $Q(R)$ by 
$v(c/b)=v(c)-v(b)$. It is easy to check that $(Q(R),v)$ is a valued field.

By definition, we have $R \subseteq R''$. Note that $\frac{\pi_{(i,j)}}{\pi_{(i',j)}} \in R'' \setminus R$ if $i \neq i'$. Finally, one easily sees that $R$ is complete with respect to $v$.
\end{proof}

Note that $v(\pi_{(i,j)})=\frac{j}{\delta}$. 
For $(i,j) \in \mathfrak{X}$ we set
\begin{align*}
\Delta_{(i,j)} = p^i - v(\pi_{(i,j)}).
\end{align*}
One has
$\Delta_{(i,j)}=p^i - \frac{j}{\delta} \leq p^i$. We will now prove a lower bound on $\Delta_{(i,j)}$. 

\begin{lemma} \label{540}
Let $x \in \mathfrak{X}$. Then one has $\Delta_x \geq 0$ with $\Delta_x=0$ if and only if $x=(m,d_m)$. 
\end{lemma}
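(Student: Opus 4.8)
The plan is to derive the claim directly from the definition of $\mathfrak{X}$ together with two elementary facts recorded in Section \ref{ha4}: that $p \nmid d_m$, and that $d_m = \delta p^m$ is a positive integer (positivity because $\delta \in \Q_{>0}$). I will also use that $v(\pi_{(i,j)}) = j/\delta$, so that $\Delta_{(i,j)} = p^i - j/\delta$.

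For the inequality, I would simply observe that if $x = (i,j) \in \mathfrak{X}$ then by definition $j \leq \delta p^i$, whence $j/\delta \leq p^i$ and $\Delta_{(i,j)} = p^i - j/\delta \geq 0$.

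For the equality statement, I would first note that $(m, d_m) \in \mathfrak{X}$ — indeed $d_m = \delta p^m$ so the size constraint holds with equality, and $d_m \notin p\Z_{\geq 1}$ since $p \nmid d_m$ — and compute $\Delta_{(m,d_m)} = p^m - d_m/\delta = p^m - p^m = 0$. For the converse I would assume $\Delta_{(i,j)} = 0$ for some $(i,j) \in \mathfrak{X}$, so $j = \delta p^i = d_m p^{i-m}$, and split into cases on the sign of $i - m$: if $i > m$ then $j$ is a positive multiple of $p$, contradicting the condition $j \notin p\Z_{\geq 1}$ built into $\mathfrak{X}$; if $i < m$ then $j = d_m/p^{m-i}$, and since $j$ is a (necessarily positive) integer this forces $p^{m-i} \mid d_m$ with $m - i \geq 1$, contradicting $p \nmid d_m$; hence $i = m$, and then $j = d_m$, i.e.\ $x = (m, d_m)$.

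I do not anticipate any real difficulty here; the only thing to keep track of is that the two arithmetic conditions defining $\mathfrak{X}$ (that the second coordinate is an integer and is not a positive multiple of $p$) are exactly what eliminate the cases $i \neq m$ in the equality analysis.
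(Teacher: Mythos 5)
Your proof is correct and is exactly the argument the paper intends — the paper's proof is just the remark that the claim "follows easily from the definition of $\mathfrak{X}$," and your case analysis ($i>m$ ruled out by $j\notin p\Z_{\geq 1}$, $i<m$ ruled out by $p\nmid d_m$) spells out precisely that verification.
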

\begin{proof}
This follows easily from the definition of $\mathfrak{X}$. 
\end{proof}

Recall $\pi_i(T)$ are power series with $E(\pi_i(T))=(1+T)^{p^i}$. We will now define two ring morphisms. 

First we have the following (continuous) $\Z_p$-algebra morphism:
\begin{align*}
\ev_{T}: R \to& \Z_p[[T]]
\end{align*}
which sends $\pi_{(i,j)}$ to $\pi_i(T) \in \Z_p[[T]$. 

Secondly, let $\chi: \Z_p \to \C_p^*$ be a continuous group homomorphism. Set $\pi_\chi:=\chi(1)-1$ and assume $v_p(\pi_{\chi})>0$.  We then define the following (continuous) $\Z_p$-algebra morphism
\begin{align*}
\ev_{\chi}: R \to& \Z_p[[\pi_{\chi}]] \subseteq \C_p
\end{align*}
which sends $\pi_{i,j}$ to $\pi_i(\pi_{\chi}) \in \Z_p[[\pi_{\chi}]]$.

\subsection{The $\pi$-adic $L$-function}

For a given genus stable cover $\mathcal{P}$ over $\F_q$ 
with $\delta$ and coefficients $(a_{ij})_{(i,j)\in\mathfrak{X}}$,
we introduce our $\pi$-adic $L$- and $C$-functions
with coefficients in $R$, 
which are deformations of classical $p$-adic and $T$-adic  
$L$- and $C$-functions, respectively. Recall that $[~]$ denotes the Teichm\"uller map. 


For $k \in \Z_{\geq 1}$ we define the $\pi$-adic exponential sum:
\begin{align*}
S^*(k,\pi) = \sum_{x \in \F_{q^k}^*} 
\prod_{(i,j) \in \mathfrak{X}}  E(\pi_{(i,j)})^{\mathrm{Tr}_{\Z_{q^{k}}/\Z_p}( [a_{ij}] [x]^j )} \in R. 
\end{align*}

\begin{definition}
We define the $\pi$-adic $L$-functions of our $\Z_p$-cover by
\begin{align*}
L^*(\pi,s) =& \exp \left( \sum_{k=1}^{\infty} S^*(k,\pi) \frac{s^k}{k} \right) \\
=&  \prod_{x \in |\A^1_k|\setminus \{0\} } \frac{1}{1- s^{\deg(x)} \prod_{(i,j) \in \mathfrak{X}}   E(\pi_{(i,j)})^{\mathrm{Tr}_{\Z_{q^{\deg(x)}}/\Z_p}( [a_{ij}] [x]^j )} } \in 1+ s R[[s]]
\end{align*}
and
\begin{align*}
L(\pi,s)= \frac{L^*(\pi,s)}{1-   s  \prod_{(i,j) \in \mathfrak{X}: j=0}   E(\pi_{(i,j)})^{\mathrm{Tr}_{\Z_{q^{\deg(x)}}/\Z_p}( [a_{ij}] )}} \in 1 + s R[[s]].
\end{align*}
We define the $\pi$-adic characteristic function by
\begin{align*}
C^*(\pi,s) = \exp\left( \sum_{k=1}^{\infty} \frac{1}{1-q^k} S^*(k,\pi) \frac{s^k}{k} \right) \in 1+ sR[[s]].
\end{align*}
\end{definition}

One has
\begin{align*}
C^*(\pi,s)= L^*(\pi,s) L^*(\pi,qs) L^*(\pi, q^2s) \cdots, \textrm{ and } L^*(\pi,s) = \frac{C^*(\pi,s)}{C^*(\pi,qs)}. 
\end{align*}

We extend $\ev_T$ to a ring homomorphism  $R[[s]] \to \Z_p[[T]][[s]]$ by using $\ev_T$ coefficient wise. Similarly, if 
$\chi: \Z_p \to \C_p^*$ is a continuous character with $v_p(\pi_{\chi})>0$, then we extend $\ev_{\chi}$ to $\ev_{\chi}: R[[s]] \to \Z_p[[\pi_{\chi}]][[s]]$. 

The classical $L$-function is given by
\begin{align*}
L(\chi,s) = \prod_{x \in |\A^1_k|} \frac{1}{1- \chi(\mathrm{Frob}(x)) s^{\deg(x)}}= \ev_{\chi} (L(\pi,s)).
\end{align*}

\begin{remark} \label{jus}
Other specializations of $S^*$, $L^*$ and $C^*$ give rise to known functions in $\Z_p[[T]][[s]]$. The following functions are introduced in \cite{WAN7} to study $L(\chi,s)$:
\begin{align*}
S^*(k,T)=& \ev_T(S^*(k,\pi)), \\
L^*(T,s) =& \ev_T(L^*(\pi,s)), \\
C^*(T,s) =& \ev_T(C^*(\pi,s)).
\end{align*}
\end{remark}

\begin{remark}
\label{R:remark1}
We shall mainly focus on two types of Newton polygons in this paper.

For $h=\sum_{i}b_i s^i \in R[[s]]$ we define its $\pi$-adic Newton polygon to be the lower convex hull of $\{(i,v(b_i)): i=0,1,\ldots\}$. 

Secondly, let $\chi: \Z_p \to \C_p^*$ a finite non-trivial character. Recall that $v_{\pi_{\chi}}$ is the valuation on $\C_p$ with $v_{\pi_{\chi}}(\pi_{\chi})=1$. For $h=\sum_{i}b_i s^i  \in \C_p[[s]]$ we define its $\pi_{\chi}$-adic Newton polygon to be the lower convex hull of 
$\{ (i,v_{\pi_{\chi}}(b_i)): i=0,1,\ldots\}$.  
\end{remark}

\section{Dwork's theory}

We prove $\pi$-adic Hodge lower bound for the $C$- and $L$-functions defined in previous section.
For convenience of the reader, this section is largely self-contained.
Most arguments here are similar to that of \cite{WAN9}.

Retain $R=\Z_p[[\pi_{x}: x \in \mathfrak{X}]]$ with the $\pi$-adic 
valuation defined by $v(-)$ in Section \ref{s:4}. Let $R'=\Z_q[[\pi_{x}: x \in \mathfrak{X}]]$, with a similarly defined valuation $v$.
Let 
\begin{align*}
B=R'\gen{X}=
\left\{\sum_{i=0}^{\infty} c_i X^i\in R'[[X]]: v(c_i)\rightarrow \infty \mbox{ as } i\rightarrow \infty \right\}. 
\end{align*}
This $B$ is a 
$\pi$-adic Banach $R'$-module. The set
$\Gamma=\{X^i: i \in \Z_{\geq 0}\}$ is a basis of $B$ as Banach $R'$-module. 
For any integer $n\geq 1$, we write $\mu_n=\{x \in \C_p: x^n=1\}$.  

\begin{lemma}[Commutativity] \label{112}
Let $E(-)$ be the Artin-Hasse exponential function and let $T$ be a variable, 
then we have the following commutative diagram
\begin{align*}
    \xymatrix{ \mu_{q-1} \sqcup \{0\} \ar[d]_{\mathrm{Tr}_{\Z_q/\Z_p} } \ar[r]^{E(\cdot T)}  \ar[d] &  \Z_q[[T]] \ar[d] \ar[d]^{ \mathrm{Norm}_{\Z_q[[T]]/\Z_p[[T]]} } \\
\mu_{p-1} \sqcup \{0\} \ar[r]^{E(T)^{\cdot}}  & \Z_p[[T]].
 }
\end{align*}
\end{lemma}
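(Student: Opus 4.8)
The plan is to unwind the commutativity of the diagram into the single identity
\[
\mathrm{Norm}_{\Z_q[[T]]/\Z_p[[T]]}\bigl(E(\omega T)\bigr) \;=\; E(T)^{\mathrm{Tr}_{\Z_q/\Z_p}(\omega)}
\qquad \text{for all } \omega\in\mu_{q-1}\sqcup\{0\},
\]
where for $c\in\Z_p$ the symbol $E(T)^{c}$ means $\exp\bigl(c\log E(T)\bigr)=\exp\bigl(c\sum_{i\geq 0}T^{p^i}/p^i\bigr)$, an element of $1+T\Z_p[[T]]$ (integrality being the usual binomial-series fact). The case $\omega=0$ is trivial: $E(0)=1$ and $\mathrm{Tr}_{\Z_q/\Z_p}(0)=0$, so both sides equal $1$. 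So assume $\omega\in\mu_{q-1}$.

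First I would rewrite the norm explicitly. Put $a=[\F_q:\F_p]$ and let $\sigma$ generate $\Gal(\Z_q/\Z_p)$, extended to $\Z_q[[T]]$ by acting on coefficients and fixing $T$; then the norm is $\prod_{l=0}^{a-1}\sigma^{l}(-)$. Since $E(-)$ has coefficients in $\Z_{(p)}$ and hence is fixed by $\sigma$, and since $\sigma$ sends the Teichm\"uller element $\omega$ to $\omega^{p}$ (because $\omega^{p}\in\mu_{q-1}$ reduces to $\overline{\omega}^{\,p}$, which is the Frobenius image of $\overline{\omega}$ in $\F_q$), one gets
\[
\mathrm{Norm}_{\Z_q[[T]]/\Z_p[[T]]}\bigl(E(\omega T)\bigr)
= \prod_{l=0}^{a-1} E\bigl(\sigma^{l}(\omega)\,T\bigr)
= \prod_{l=0}^{a-1} E\bigl(\omega^{p^{l}}T\bigr).
\]

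Next, substituting $T\mapsto\omega^{p^{l}}T$ in the defining series of $E$ gives $E(\omega^{p^{l}}T)=\exp\bigl(\sum_{i\geq 0}\omega^{p^{l+i}}T^{p^i}/p^i\bigr)$; multiplying these (legitimately over $\Q_q[[T]]$, all series having zero constant term) yields
\[
\prod_{l=0}^{a-1} E\bigl(\omega^{p^{l}}T\bigr)
= \exp\!\left(\sum_{i\geq 0}\frac{T^{p^i}}{p^i}\sum_{l=0}^{a-1}\omega^{p^{l+i}}\right).
\]
The one step with genuine content is the evaluation of the inner sum: $\sum_{l=0}^{a-1}\omega^{p^{l+i}}=\sum_{l=0}^{a-1}\sigma^{l}(\omega^{p^i})=\mathrm{Tr}_{\Z_q/\Z_p}(\omega^{p^i})$, and since the trace is invariant under $\Gal(\Z_q/\Z_p)$ while $\omega^{p^i}=\sigma^{i}(\omega)$, this equals $\mathrm{Tr}_{\Z_q/\Z_p}(\omega)$ for every $i$. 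Denoting this constant by $c\in\Z_p$, the exponent becomes $c\sum_{i\geq 0}T^{p^i}/p^i=c\log E(T)$, so the product equals $E(T)^{c}$, which is precisely the claimed identity. Finally I would remark that both sides a priori lie in $\Z_p[[T]]$ (the norm of an element of $\Z_q[[T]]$ is $\Gal$-fixed and integral over $\Z_p[[T]]$; $E(T)^{c}\in\Z_p[[T]]$ as noted), so the equality, established over $\Q_p[[T]]$, already holds there. The main obstacle is purely bookkeeping --- making precise the statements "$\sigma$ acts as the $p$-th power on Teichm\"uller representatives" and "the trace is $\Gal$-invariant" and chaining them correctly --- with no analytic input beyond manipulating $\exp$ and $\log$ of power series with zero constant term.
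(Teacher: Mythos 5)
Your proposal is correct and is essentially the paper's own argument, just run in the opposite direction: the paper starts from $E(T)^{\mathrm{Tr}_{\Z_q/\Z_p}(x)}$ and regroups the exponential into $\prod_{l} E(x^{p^l}T)=\mathrm{Norm}(E(xT))$, while you start from the norm and collapse it using the same key fact, namely the shift-invariance $\sum_{l=0}^{a-1}x^{p^{l+i}}=\sum_{l=0}^{a-1}x^{p^{l}}$ for $x\in\mu_{q-1}$ (i.e.\ Galois-invariance of the trace). The extra bookkeeping you add (the $\omega=0$ case, $\sigma$ acting as $p$-th power on Teichm\"uller representatives, integrality of $E(T)^{c}$ for $c\in\Z_p$) is sound and only makes explicit what the paper leaves implicit.
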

\begin{proof}
For $x \in \mu_{q-1}$ one has $\sum_{j=0}^{a-1} x^{p^j}= \sum_{j=0}^{a-1} x^{p^{j+i}}$. Hence one has
\begin{align*}
E(T)^{\mathrm{Tr}_{\Z_q/\Z_p}(x)} =&  E(T)^{x+x^p+\ldots+x^{p^{a-1}}} = \mathrm{exp}\left( \sum_{i=0}^{\infty} \frac{T^{p^i}}{p^i} \sum_{j=0}^{a-1} x^{p^{j+i}} \right)\\
=&  E(T x)E(T x^p)\cdots E(T x^{p^{a-1}}) = \mathrm{Norm}_{\Z_q[[T]]/\Z_p[[T]]}(E(Tx)).
\end{align*}
\end{proof}

Recall our genus stable $\Z_p$-cover $\mathcal{P}$ 
with its defining data: for each $(i,j)\in \mathfrak{X}$,
$f_i=\sum_{0\leq j\leq \delta p^i}a_{ij}X^j$ for $a_{ij}\in k$ and $\delta=\frac{d_m}{p^m}$. Recall that $[a_{ij}]\in \Z_q$ is the Teichm\"uller lift of $a_{ij}$. 
For $(i,j) \in \mathfrak{X}$ set
\begin{align*}
E_{f_{(i.j)}} = E(  \pi_{(i,j)} [a_{ij}] X^j ) \in R'[[X]] ,\quad E_f(X) = \prod_{x \in \mathfrak{X}}  E_{f_x} \in R'[[X]].
\end{align*}
Then we can write 
$
E_f(X) =\sum_{k=0}^{\infty} \eta_k X^k
$
with $\eta_k \in R'$. 

\begin{lemma} \label{104}
For any $k \in \Z_{\geq 0}$, one has $v(\eta_k) \in \{ \frac{k}{\delta}, \infty\}$.
In particular $E_f(X)\in B$.
\end{lemma}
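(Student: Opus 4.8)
The plan is to compute $v(\eta_k)$ by expanding the product $E_f(X) = \prod_{x \in \mathfrak{X}} E_{f_x}$ and tracking the $\pi$-adic valuation monomial by monomial. Recall that for each $x=(i,j) \in \mathfrak{X}$ we have $E_{f_x} = E(\pi_{(i,j)}[a_{ij}]X^j)$, and by the structure of the Artin-Hasse exponential (Lemma \ref{saso} applied with the variable being $\pi_{(i,j)}[a_{ij}]X^j$), this power series has the form $\sum_{\ell \geq 0} c_{x,\ell} \pi_{(i,j)}^\ell [a_{ij}]^\ell X^{j\ell}$ with $c_{x,\ell} \in \Z_{(p)}$ and $c_{x,0}=c_{x,1}=1$; in particular every monomial appearing in $E_{f_x}$ of $X$-degree $j\ell$ has $\pi$-valuation exactly $\ell \cdot v(\pi_{(i,j)}) = \ell j/\delta$, or is zero. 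So each $E_{f_x}$ already has the property that the coefficient of $X^d$ has valuation in $\{d/\delta, \infty\}$: its nonzero monomials of $X$-degree $d$ only occur when $j \mid d$, and then carry valuation $(d/j)\cdot(j/\delta) = d/\delta$.

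Next I would multiply these together. When we form $\eta_k = \sum \prod_x (\text{coeff of } X^{d_x} \text{ in } E_{f_x})$ over all ways of writing $k = \sum_x d_x$, each surviving term in the sum has $X$-degree $k$ and $\pi$-valuation $\sum_x d_x/\delta = k/\delta$ — provided it is nonzero. Since the $\pi$-adic valuation is a genuine discrete valuation on the domain $R$ (by the Proposition in Section \ref{543}), the valuation of a sum is at least the minimum of the valuations of the summands, so $v(\eta_k) \geq k/\delta$; and it is consistent with the claim that either $v(\eta_k) = k/\delta$ or there is enough cancellation to push it to $\infty$, i.e. $\eta_k = 0$. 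The key point making this clean is that \emph{all} nonzero contributions to $\eta_k$ sit in the single graded piece of valuation exactly $k/\delta$, so $\eta_k$ lies in that graded piece, whose only values are $k/\delta$ and $\infty$. Formally, one can phrase this via the monomial grading on $R = \Z_p[[\pi_x]]$ by total weighted degree $w(\pi^u) = \sum_{(i,j)} j\, u((i,j))$: each $E_{f_x}$ is homogeneous in the sense that its $X^d$-coefficient is $w$-homogeneous of weight $d$ (or zero), hence so is the $X^k$-coefficient of the product, and $v$ of a nonzero $w$-homogeneous element of weight $k$ is exactly $k/\delta$.

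Finally, the last sentence "$E_f(X) \in B$" requires checking that $v(\eta_k) \to \infty$ as $k \to \infty$ — but this is immediate once $v(\eta_k) \geq k/\delta$ with $\delta$ a fixed positive rational, so $v(\eta_k) \geq k/\delta \to \infty$; then by definition of $B = R'\gen{X}$ we conclude $E_f(X) \in B$. (Strictly this gives membership in $R\gen{X}$; since $R' = \Z_q[[\pi_x]] \supseteq R$ and the $[a_{ij}] \in \Z_q$, the coefficients genuinely lie in $R'$, so $E_f(X) \in B$.)

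I expect the only mildly delicate point to be the bookkeeping in the first step: verifying that $E(\pi_{(i,j)}[a_{ij}]X^j)$, when expanded in $X$, really does have all its nonzero $X^d$-coefficients of $\pi$-valuation exactly $d/\delta$. This follows from the fact that $E(T) \in 1 + T + T^2\Z_{(p)}[[T]]$ and substituting $T \mapsto \pi_{(i,j)}[a_{ij}]X^j$, so the coefficient of $X^{j\ell}$ is $(\text{a }p\text{-integral constant}) \cdot \pi_{(i,j)}^\ell [a_{ij}]^\ell$, which has $v$-value $\ell j/\delta = (j\ell)/\delta$ unless the constant is zero. Everything else is formal manipulation with the grading and the valuation properties established in the preceding Proposition.
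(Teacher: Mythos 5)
Your proof is correct and is essentially the paper's own argument, just written out in full: the paper's proof is the one-line observation that $\pi_{(i,j)}$ always appears coupled with $X^j$ in $E_{f_{(i,j)}}$, so every monomial contributing to the $X^k$-coefficient has $\pi$-weight exactly $k$, giving $v(\eta_k)\in\{\frac{k}{\delta},\infty\}$, and $E_f(X)\in B$ follows since $v(\eta_k)\geq \frac{k}{\delta}\to\infty$. Your homogeneity/grading bookkeeping is exactly the "direct computation" the paper alludes to, so no further comment is needed.
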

\begin{proof}
The first statement follows from definition $v(\pi_{(i,k)} ) = \frac{k}{\delta}$ and direct computation: the reason is that $\pi_{(i,j)}$ is always coupled with $X^j$ in the definition of $E_{f_{(i,j)}}$. The second statement follows from the first.
\end{proof}

\begin{lemma}[Dwork's splitting lemma] \label{120}
For any $x \in \F_{q^k}$ one has
$$
\prod_{(i,j) \in \mathfrak{X}}  E(\pi_{(i,j)})^{\mathrm{Tr}_{\Z_{q^{k}}/\Z_p}( [a_{ij}] [x]^j )}   = \prod_{l=0}^{ak-1} E_f^{\sigma^l}([x]^{p^l}) \in R'
$$
\end{lemma}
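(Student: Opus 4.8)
The plan is to prove the identity termwise in the product over $(i,j) \in \mathfrak{X}$, reducing everything to a single Artin-Hasse factor, and then to apply the Commutativity Lemma (Lemma \ref{112}) followed by a rearrangement of the resulting norm. Fix $x \in \F_{q^k}$, let $[x] \in \Z_{q^k}$ be its Teichm\"uller lift, and recall that $\sigma$ denotes the Frobenius on $\Z_{q^k}$ (acting coefficientwise on $R'[[X]]$, fixing $\pi_{(i,j)}$, and raising Teichm\"uller lifts to the $p$-th power). The right-hand side unwinds as
\begin{align*}
\prod_{l=0}^{ak-1} E_f^{\sigma^l}([x]^{p^l}) = \prod_{l=0}^{ak-1} \prod_{(i,j) \in \mathfrak{X}} E\!\left( \pi_{(i,j)} [a_{ij}]^{p^l} [x]^{jp^l} \right),
\end{align*}
using that $\sigma$ fixes $\pi_{(i,j)}$ and sends $[a_{ij}] \mapsto [a_{ij}]^{p^l}$, $X \mapsto X$ (so substituting $X = [x]^{p^l}$ gives the exponent $[a_{ij}]^{p^l}[x]^{jp^l}$). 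Since the product over $(i,j)$ and the product over $l$ commute, it suffices to show, for each fixed $(i,j) \in \mathfrak{X}$,
\begin{align*}
E(\pi_{(i,j)})^{\mathrm{Tr}_{\Z_{q^k}/\Z_p}( [a_{ij}] [x]^j )} = \prod_{l=0}^{ak-1} E\!\left( \pi_{(i,j)} [a_{ij}]^{p^l} [x]^{jp^l} \right).
\end{align*}

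To establish this one-factor identity, I would apply Lemma \ref{112} over the field $\F_{q^k}$ rather than $\F_q$, i.e. with $ak$ in place of $a$. Write $T = \pi_{(i,j)}$ (a variable over $\Z_p$, or rather an element of positive valuation in $R'$, but the formal identity holds over $\Z_p[[T]]$ and then specializes). The element $[a_{ij}][x]^j$ lies in $\mu_{q^k-1} \sqcup \{0\}$, being a Teichm\"uller lift in $\Z_{q^k}$. The proof of Lemma \ref{112}, with $a$ replaced by $ak$, gives precisely
\begin{align*}
E(T)^{\mathrm{Tr}_{\Z_{q^k}/\Z_p}(w)} = \prod_{l=0}^{ak-1} E(T w^{p^l})
\end{align*}
for $w \in \mu_{q^k-1} \sqcup \{0\}$, since $\mathrm{Tr}_{\Z_{q^k}/\Z_p}(w) = \sum_{l=0}^{ak-1} w^{p^l}$ and $E(T)^{w^{p^l}} = E(Tw^{p^l})$ by the defining series for $E$ (the same computation as in the proof of Lemma \ref{112}). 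Taking $w = [a_{ij}][x]^j$ and using $w^{p^l} = [a_{ij}]^{p^l}[x]^{jp^l}$ yields the displayed one-factor identity. Specializing $T \mapsto \pi_{(i,j)}$ via the continuous $\Z_p$-algebra structure of $R'$ preserves the equality.

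Finally, I would combine the factors: taking the product of the one-factor identities over all $(i,j) \in \mathfrak{X}$ and interchanging the two products gives
\begin{align*}
\prod_{(i,j) \in \mathfrak{X}} E(\pi_{(i,j)})^{\mathrm{Tr}_{\Z_{q^k}/\Z_p}( [a_{ij}] [x]^j )} = \prod_{(i,j) \in \mathfrak{X}} \prod_{l=0}^{ak-1} E\!\left( \pi_{(i,j)} [a_{ij}]^{p^l} [x]^{jp^l} \right) = \prod_{l=0}^{ak-1} E_f^{\sigma^l}([x]^{p^l}),
\end{align*}
which is the claim. The only genuinely delicate point is the bookkeeping around the Frobenius action: one must be careful that $\sigma$ on $R'[[X]] = \Z_q[[\pi_x : x \in \mathfrak{X}]][[X]]$ acts only on the $\Z_q$-coefficients and not on the variables $\pi_x$ or $X$, so that $E_f^{\sigma^l}$ has the same $\pi$- and $X$-dependence as $E_f$ but with $[a_{ij}]$ replaced by $[a_{ij}]^{p^l}$; and that evaluating the variable $X$ at $[x]^{p^l}$ (an element of $\Z_{q^k}$, legitimate because $E_f \in B = R'\gen{X}$ converges on the closed unit disc) interacts correctly with that substitution. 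I also need the infinite product over $\mathfrak{X}$ to converge $\pi$-adically, which follows from Lemma \ref{104} together with $v(\pi_{(i,j)}) = j/\delta \to \infty$, so all manipulations are justified in the complete ring $R'$. Once the indexing is set up cleanly, the computation is essentially the proof of Lemma \ref{112} applied factor by factor.
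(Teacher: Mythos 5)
Your proposal follows essentially the same route as the paper: reduce to a single factor $(i,j)$, apply the computation of Lemma \ref{112} with $\Z_{q^k}/\Z_p$ (i.e.\ $ak$ in place of $a$) to the Teichm\"uller element $w=[a_{ij}][x]^j\in\mu_{q^k-1}\sqcup\{0\}$, identify $E\bigl(\pi_{(i,j)}w^{p^l}\bigr)$ with $E_{f,(i,j)}^{\sigma^l}([x]^{p^l})$, and multiply over $(i,j)$; you even make explicit the $ak$-version of Lemma \ref{112} that the paper uses tacitly, and the convergence remarks are fine.

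One local correction, though: the parenthetical claim $E(T)^{w^{p^l}}=E(Tw^{p^l})$ is false in general. Indeed $E(T)^{c}=\exp\bigl(\sum_i c\,T^{p^i}/p^i\bigr)$ while $E(Tc)=\exp\bigl(\sum_i c^{p^i}T^{p^i}/p^i\bigr)$, and these agree only when $c$ is fixed by Frobenius (e.g.\ $c\in\mu_{p-1}\sqcup\{0\}$), which $w^{p^l}$ typically is not. The identity you need, $E(T)^{\mathrm{Tr}_{\Z_{q^k}/\Z_p}(w)}=\prod_{l=0}^{ak-1}E(Tw^{p^l})$, holds only at the level of the full product: inside the exponential one gets $\sum_i \frac{T^{p^i}}{p^i}\sum_l w^{p^{l+i}}$, and the inner sum equals $\mathrm{Tr}(w)$ because $l+i$ runs over a complete residue system modulo $ak$ (using $w^{q^k}=w$) — exactly the rearrangement in the proof of Lemma \ref{112}. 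Since you also cite that computation, your argument is repaired by simply deleting the factorwise claim and invoking the orbit-sum rearrangement directly; with that change the proof matches the paper's.
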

\begin{proof}
For $(i,j) \in \mathfrak{X}$ on has by Lemma \ref{112}:
\begin{align*}
E(\pi_{(i,j)})^{\mathrm{Tr}_{\Z_{q^{k}}/\Z_p}( [a_{ij}] [x]^j )}  = \prod_{l=0}^{ak-1} E(\pi_{(i,j)}( [a_{ij}] [x]^j )^{p^l}) =\prod_{l=0}^{ak-1}  E_{f,(i,j)}^{\sigma^l}([x]^{p^l}).
\end{align*}
If one takes the product over all $(i,j) \in \mathfrak{X}$, the result follows. 
\end{proof}

Let $G=\Gal(\Q_q/\Q_p)=\langle \sigma \rangle$ where $\sigma$ is the Frobenius element. The group $G$ acts on a power series in $R'$ via the coefficient ring $\Z_q$ and trivially on all $\pi_x$'s.
Its action on $R'$ extends to $B$ by acting trivially on $X$.
Let $\psi_p$ be the $R'$-linear operator 
\begin{align*}
\psi_p( \sum_{i \geq 0} c_i X^i) = \sum_{i \geq 0} c_{pi} X^i
\end{align*}
on $B$. Any $b \in B$ induces a linear operator from $B$ to $B$ by the multiplication by $b$ map, which we denote just by $b$.
Then define the Dwork operator $\psi$ on $B$ by 
$$
\psi=\sigma^{-1} \circ \psi_p \circ E_f(X),
$$
where $E_f(X)$ is the multiplication by $E_f(X)$ map.
This operator is $R$-linear, but generally not $R'$-linear. 
For $r(X), s(X) \in B$ one has 
$r(X) \circ \psi_p =\psi_p \circ r(X^p)$,
$\sigma^{-1} \circ r(X) = r^{\sigma}(X)\circ \sigma^{-1}.
$
Furthermore, $\sigma^{-1}$ and $\psi_p$ commute.
For any integer $k \in \Z_{\geq 0}$ we find
$
\psi^k=\sigma^{-k} \circ \psi_p^k \circ \left( \prod_{i=0}^{k-1} E_f^{\sigma^i}(X^{p^i}) \right).
$
In particular, we see that $\psi^a$ is $R'$-linear and satisfies
$
\psi^a= \psi_p^a \circ \left( \prod_{i=0}^{a-1} E_f^{\sigma^i}(X^{p^i}) \right).
$

Recall that $\Gamma=\{X^i: i \in \Z_{\geq 0}\}$ is a basis of $B$ as Banach $R'$-module. Set $\beta_i=0$ for $i<0$. 

\begin{lemma} \label{777}
Write 
$\prod_{i=0}^{k-1} E_f^{\sigma^i}(X^{p^i}) =\sum_{l=0}^{\infty} \beta_l X^l$ 
with $\beta_l \in R'$. Then one has $v(\beta_l)\geq \frac{l}{p^{k-1}\delta}$.
For $k \in \Z_{\geq 1}$ the map $\sigma^k \circ \psi^k$ is $R'$-linear and one has $\Mat(\sigma^k \circ \psi^k|\Gamma)=  (\beta_{p^k i-j})_{i,j} $. 
\end{lemma}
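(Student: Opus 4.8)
The statement has two parts: the valuation bound $v(\beta_l) \geq \frac{l}{p^{k-1}\delta}$ for the coefficients of $\prod_{i=0}^{k-1} E_f^{\sigma^i}(X^{p^i})$, and the matrix identity $\Mat(\sigma^k \circ \psi^k \mid \Gamma) = (\beta_{p^k i - j})_{i,j}$ together with $R'$-linearity of $\sigma^k \circ \psi^k$. I would treat these separately, starting with the valuation bound, which is the substantive point.

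For the valuation bound, recall from Lemma \ref{104} that if $E_f(X) = \sum_k \eta_k X^k$ then $v(\eta_k) \in \{k/\delta, \infty\}$; since $\sigma$ fixes every $\pi_x$ and acts only on the $\Z_q$-coefficients (which do not affect $v$), the same holds for each $E_f^\sigma(X)$, i.e. $v$ of the degree-$k$ coefficient of $E_f^{\sigma^i}(X)$ is $\geq k/\delta$. Substituting $X^{p^i}$ for $X$, the degree-$k p^i$ coefficient of $E_f^{\sigma^i}(X^{p^i})$ has valuation $\geq k/\delta \geq (k p^i)/(p^i \delta)$; more uniformly, the degree-$r$ coefficient of $E_f^{\sigma^i}(X^{p^i})$ has valuation $\geq r/(p^i\delta)$ (it is zero unless $p^i \mid r$). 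Now multiply: a monomial of degree $l$ in the product $\prod_{i=0}^{k-1} E_f^{\sigma^i}(X^{p^i})$ is a product of monomials of degrees $r_0, \dots, r_{k-1}$ with $\sum r_i = l$, and its valuation is $\geq \sum_{i=0}^{k-1} r_i/(p^i\delta) \geq \frac{1}{p^{k-1}\delta}\sum_i r_i = \frac{l}{p^{k-1}\delta}$, using that $p^i \leq p^{k-1}$ for $i \leq k-1$. Taking the minimum over such decompositions (and using $v(r+s)\geq\min(v(r),v(s))$) gives $v(\beta_l) \geq l/(p^{k-1}\delta)$.

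For the second part, I would use the displayed formula already established in the text, $\psi^k = \sigma^{-k} \circ \psi_p^k \circ \left(\prod_{i=0}^{k-1} E_f^{\sigma^i}(X^{p^i})\right)$, so that $\sigma^k \circ \psi^k = \psi_p^k \circ \left(\prod_{i=0}^{k-1} E_f^{\sigma^i}(X^{p^i})\right)$ (legitimate since $\sigma$ and $\psi_p$ commute), and this is $R'$-linear because $\psi_p$ is $R'$-linear and multiplication by an element of $B$ is $R'$-linear. For the matrix entries: apply the operator to the basis vector $X^j$. Multiplication by $\sum_l \beta_l X^l$ sends $X^j$ to $\sum_l \beta_l X^{l+j}$, and then $\psi_p^k$ extracts the terms whose exponent is divisible by $p^k$, re-indexing $l + j = p^k i$, i.e. $l = p^k i - j$, contributing $\beta_{p^k i - j} X^i$. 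Hence the coefficient of $X^i$ in $(\sigma^k\circ\psi^k)(X^j)$ is $\beta_{p^k i - j}$, which is exactly the $(i,j)$ entry of the claimed matrix (with the convention $\beta_l = 0$ for $l < 0$ handling the cases $p^k i < j$).

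I expect no real obstacle here; the only point requiring a little care is keeping the index bookkeeping straight in the product-of-power-series valuation estimate — in particular making sure the exponent $i$ of $p$ in each factor $E_f^{\sigma^i}(X^{p^i})$ is bounded by $k-1$, which is what produces the uniform denominator $p^{k-1}\delta$. The $R'$-linearity and matrix computation are formal consequences of the operator identity already derived in the paragraph preceding the lemma.
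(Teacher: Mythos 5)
Your proposal is correct and follows the same route as the paper: the valuation bound is deduced from Lemma \ref{104} (you just spell out the substitution $X\mapsto X^{p^i}$, the $\sigma$-invariance of $v$, and the estimate $\sum_i r_i/(p^i\delta)\geq l/(p^{k-1}\delta)$ that the paper leaves implicit), and the matrix identity and $R'$-linearity come from applying $\sigma^k\circ\psi^k=\psi_p^k\circ\bigl(\prod_{i=0}^{k-1}E_f^{\sigma^i}(X^{p^i})\bigr)$ to the basis $\Gamma$, exactly as in the paper's one-line computation.
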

\begin{proof}
By Lemma \ref{104}, one has $v(\beta_l) \geq \frac{l}{p^{k-1} \delta}$.
On the other hand we have
\begin{align*}
\sigma^k \circ \psi^k \left(X^i \right)  = \sum_{l=0}^\infty \beta_{p^kl-i}  X^l.
\end{align*}
\end{proof}

\begin{theorem}[Dwork's trace formula] \label{1123}
For any  positive integer $k$ one has
\begin{align*}
S^*(k,\pi) = (q^k-1)\cdot  \mathrm{Tr}(\psi^{ak}|\Gamma) \in R. 
\end{align*}
\end{theorem}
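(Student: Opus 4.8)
The plan is to establish Dwork's trace formula by combining Dwork's splitting lemma (Lemma \ref{120}) with the trace interpretation of $\psi^{ak}$, following the classical template. First I would unwind the definition of $S^*(k,\pi)$: it is a sum over $x \in \F_{q^k}^*$ of $\prod_{(i,j) \in \mathfrak{X}} E(\pi_{(i,j)})^{\mathrm{Tr}_{\Z_{q^k}/\Z_p}([a_{ij}][x]^j)}$. By Lemma \ref{120}, each summand equals $\prod_{l=0}^{ak-1} E_f^{\sigma^l}([x]^{p^l})$. Next I would recognize the product $\prod_{l=0}^{ak-1} E_f^{\sigma^l}([x]^{p^l})$ as the value at the Teichm\"uller point $[x]$ of the function $\prod_{l=0}^{ak-1} E_f^{\sigma^l}(X^{p^l})$, which by the computation recorded just before Lemma \ref{777} is precisely the multiplication kernel appearing in $\sigma^{ak} \circ \psi^{ak}$ (note $\sigma^{ak} = \id$ since $ak$ is a multiple of $a = [\F_q:\F_p]$, so this is just $\psi^{ak}$, an $R'$-linear — in fact $R$-linear — operator on $B$).

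The second ingredient is the standard Dwork fixed-point/trace identity: for an operator of the form $\psi^{ak} = \psi_p^{ak} \circ H(X)$ with $H(X) = \sum_l \beta_l X^l \in B$, one has
\begin{align*}
(q^k - 1) \cdot \mathrm{Tr}(\psi^{ak} \mid \Gamma) = \sum_{x \in \mu_{q^k-1}} H([x]) = \sum_{x \in \F_{q^k}^*} \prod_{l=0}^{ak-1} E_f^{\sigma^l}([x]^{p^l}).
\end{align*}
To prove this I would compute the matrix of $\psi^{ak}$ on the basis $\Gamma$ using Lemma \ref{777} (with $k$ there replaced by $ak$): the $(i,j)$ entry is $\beta_{q^k i - j}$, so $\mathrm{Tr}(\psi^{ak}\mid\Gamma) = \sum_{i \geq 0} \beta_{(q^k-1)i}$. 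On the other hand, $\sum_{x \in \mu_{q^k-1}} [x]^n$ equals $q^k - 1$ when $(q^k-1) \mid n$ and $0$ otherwise (the usual character-sum over roots of unity, valid since $\mu_{q^k-1}$ injects into $\Z_{q^k}$ and we sum a nontrivial power unless the exponent is divisible by the group order), whence $\sum_{x \in \mu_{q^k-1}} H([x]) = \sum_{(q^k-1) \mid n} \beta_n (q^k-1) = (q^k-1)\sum_{i\geq 0}\beta_{(q^k-1)i}$. Comparing the two gives the identity, and then Lemma \ref{120} identifies the right-hand side with $S^*(k,\pi)$, completing the proof. One should also note convergence/well-definedness: the trace makes sense because $v(\beta_l) \geq l/(p^{ak-1}\delta) \to \infty$ (Lemma \ref{777}), so $\psi^{ak}$ is completely continuous and the sum $\sum_i \beta_{(q^k-1)i}$ converges in $R$ (indeed in $R'$, but it is Galois-fixed hence lies in $R$).

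The main obstacle I expect is not any single deep step but rather the bookkeeping needed to make the trace-of-a-completely-continuous-operator formalism rigorous over the somewhat exotic base ring $R'$ (a power series ring in infinitely many variables with the $\pi$-adic valuation $v$) rather than over a $p$-adic local field: one must check that $B = R'\langle X\rangle$ is a suitable $\pi$-adic Banach module, that $\psi^{ak}$ is completely continuous in the appropriate sense so that its trace on the orthonormal basis $\Gamma$ is defined and basis-independent, and that the interchange of the (infinite) sum over $\mu_{q^k-1}$ with the power-series expansion of $H$ is legitimate. All of these follow from the growth estimate in Lemma \ref{777} together with completeness of $R$ (established in the Proposition of Section \ref{s:4}), but assembling them cleanly is the bulk of the work; the algebraic heart — Lemma \ref{120} plus the root-of-unity character sum — is routine. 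Since the excerpt says this section is ``largely self-contained'' and ``similar to \cite{WAN9}'', I would model the Banach-module and trace formalism on that reference and emphasize only the points where the infinitely-many-variables base ring requires care.
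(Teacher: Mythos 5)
Your proposal is correct and follows essentially the same route as the paper: Lemma \ref{777} (applied with $ak$ in place of $k$, using $\sigma^{ak}=\id$) gives $\mathrm{Tr}(\psi^{ak}\mid\Gamma)=\sum_{j\geq 0}\beta_{(q^k-1)j}$, the Teichm\"uller root-of-unity character sum picks out exactly these coefficients from $\sum_{x\in\F_{q^k}^*}\sum_j \beta_j[x]^j$, and Lemma \ref{120} identifies the result with $S^*(k,\pi)$. The extra remarks on convergence and nuclearity via the valuation estimate of Lemma \ref{777} are consistent with how the paper handles these points (there, mostly implicitly and in Corollary \ref{ana}).
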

\begin{proof}
Consider the linear map $\psi^{ak}$. Write
$
\prod_{i=0}^{ak-1} E_f^{\sigma^i}(X^{p^i})) = \sum_{i=0}^{\infty} \beta_i X^i.
$ From Lemma \ref{777} one obtains
$
 \mathrm{Tr}(\psi^{ak}|\Gamma) = \sum_{j=0}^{\infty} \beta_{(q^k-1)j}\in R.
$
Note that 
$$
\sum_{x \in \F_{q^k}^*} [x]^i = \left\{ 
\begin{array}{ll} q^k-1 & \textrm{if }(q^k-1)| i \\ 0 & \textrm{otherwise.} \end{array} \right.
$$
Lemma \ref{120} then gives
\begin{align*}
(q^k-1) \cdot \mathrm{Tr}(\psi^{ak}|\Gamma) = (q^k-1)  \sum_{j=0}^{\infty} \beta_{(q^k-1)j} = \sum_{x \in \F_{q^k}^*} \sum_{j=0}^{\infty} \beta_j [x]^j \\
 = \sum_{x \in \F_{q^k}^*}  \prod_{i=0}^{ak-1} E_f^{\sigma^i}([x]^{p^i}) = S^*(k,\pi).
\end{align*}
\end{proof}

\begin{corollary}[Analytic trace formula] \label{ana}
One has
\begin{align*}
C^*(\pi,s) = \det(I-s\psi^a|\Gamma).
\end{align*}
\end{corollary}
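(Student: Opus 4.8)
The plan is to deduce the analytic trace formula from Dwork's trace formula (Theorem~\ref{1123}) by the standard formal identity relating the characteristic power series of a nuclear operator to the exponential generating function of its iterated traces. Concretely, I would first recall the classical fact that for a completely continuous (nuclear) operator $u$ on a $p$-adic Banach module, one has
\begin{align*}
\det(I - s u) = \exp\left( - \sum_{k=1}^{\infty} \mathrm{Tr}(u^k) \frac{s^k}{k} \right),
\end{align*}
and apply it with $u = \psi^a$, which is an $R'$-linear (indeed $R$-linear) operator on the Banach module $B$ with basis $\Gamma$. The inputs one needs for this to make sense are that $\psi^a$ is nuclear with respect to the $\pi$-adic valuation $v$, and that its matrix $\Mat(\psi^a | \Gamma)$ has entries going to $\infty$ in valuation off the diagonal in the appropriate sense; this follows from Lemma~\ref{777} (the estimate $v(\beta_l) \geq \frac{l}{p^{k-1}\delta}$ with $k=a$, giving the entries $\beta_{p^a i - j}$), so the determinant $\det(I - s\psi^a|\Gamma)$ converges coefficientwise in $R$ and lies in $1 + sR[[s]]$.

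Next I would combine this with the trace formula. By Theorem~\ref{1123}, $\mathrm{Tr}(\psi^{ak}|\Gamma) = S^*(k,\pi)/(q^k-1)$. Substituting into the right-hand side of the determinant identity applied to $u = \psi^a$ (so $u^k = \psi^{ak}$) gives
\begin{align*}
\det(I - s\psi^a|\Gamma) = \exp\left( - \sum_{k=1}^{\infty} \frac{S^*(k,\pi)}{q^k - 1} \frac{s^k}{k} \right) = \exp\left( \sum_{k=1}^{\infty} \frac{1}{1-q^k} S^*(k,\pi) \frac{s^k}{k} \right),
\end{align*}
which is exactly the definition of $C^*(\pi,s)$. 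So the proof is essentially a citation of the Dwork determinant--trace identity plus a substitution, mirroring \cite{WAN9}.

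The one genuine point requiring care — and what I expect to be the main obstacle — is justifying the determinant--trace identity in the present setting, where the coefficient ring $R$ (or $R'$) is not a field but a complete discretely valued ring with a somewhat unusual valuation coming from infinitely many variables. I would handle this by checking that the Serre/Dwork theory of completely continuous operators goes through verbatim: the key structural facts are that $(Q(R),v)$ is a discretely valued field and $R$ is $v$-complete (both established in the Proposition of Section~\ref{s:4}), and that $\psi^a$ is nuclear, i.e. it is a $v$-adic limit of finite-rank operators, which again follows from the decay estimate in Lemma~\ref{777}. Granting nuclearity, the Fredholm determinant $\det(I - s\psi^a|\Gamma)$ is well-defined by the usual expansion into sums of principal minors, the traces $\mathrm{Tr}(\psi^{ak}|\Gamma)$ converge, and the formal logarithmic-derivative manipulation that proves the identity is purely algebraic once convergence is in hand. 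I would state this as a lemma (or cite the relevant statement from \cite{WAN9} or Serre's ``Endomorphismes complètement continus'') and then the corollary follows immediately; no further computation is needed.
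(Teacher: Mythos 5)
Your proposal is correct and follows essentially the same route as the paper: the paper's proof likewise invokes the Fredholm determinant--trace identity $\det(I-s\psi^a|\Gamma)=\exp\bigl(-\sum_{k\geq 1}\mathrm{Tr}(\psi^{ak}|\Gamma)\frac{s^k}{k}\bigr)$, justified by nuclearity of $\psi^a$ via Lemma \ref{777}, and then substitutes Dwork's trace formula (Theorem \ref{1123}) to recover $C^*(\pi,s)$. Your additional care about the determinant theory over the complete ring $R'$ with the $\pi$-adic valuation is exactly the point the paper compresses into the single sentence that the identity holds because $\psi^a$ is nuclear.
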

\begin{proof}
This follows from Theorem \ref{1123} and the identity
\begin{align*}
\det(I- s\psi^a|\Gamma) = \exp \left( - \sum_{k=1}^{\infty} \mathrm{Tr}(\psi^{ak}|\Gamma) \frac{s^k}{k} \right) = C^*(\pi,s) \in R[[s]].
\end{align*}
This identity holds since $\psi^a$ is a nuclear operator by Lemma \ref{777}.
\end{proof}

Finally we can prove a lower bound on the $\pi$-adic Newton polygon of $C^*(\pi,s)$. 

\begin{proposition}[Hodge lower bound] \label{hod}
The $\pi$-adic Newton polygon of $C^*(\pi,s)$ lies above the polygon whose slopes are 
\begin{align*}
0, \frac{a(p-1)}{\delta}, \frac{2a(p-1)}{\delta}, \ldots,
\end{align*}
that is, above the polygon with vertices $(k, \frac{a(p-1)k(k-1)}{2\delta})$ (for $k=0,1,2,\ldots$). 
\end{proposition}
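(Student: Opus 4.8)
The plan is to estimate the Newton polygon of $C^*(\pi,s) = \det(I - s\psi^a \mid \Gamma)$ from Corollary \ref{ana} by studying the matrix of $\psi^a$ on the basis $\Gamma = \{X^i\}$. By the formula after Corollary \ref{ana}, or rather its companion for $k=a$, we have $\sigma^a \circ \psi^a = \psi^a$ is $R'$-linear with matrix $(\beta_{p^a i - j})_{i,j}$, where $\prod_{i=0}^{a-1} E_f^{\sigma^i}(X^{p^i}) = \sum_l \beta_l X^l$ and, by Lemma \ref{777} applied with $k=a$, one has $v(\beta_l) \geq \frac{l}{p^{a-1}\delta}$. Hence the $(i,j)$-entry of $\mathrm{Mat}(\psi^a \mid \Gamma)$ has $v$-valuation at least $\frac{p^a i - j}{p^{a-1}\delta} = \frac{pi - j/p^{a-1}}{\delta}$.

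Next I would bound the coefficients of the characteristic power series $\det(I - s\psi^a \mid \Gamma) = \sum_k (-1)^k c_k s^k$ where $c_k$ is the sum of the $k\times k$ principal minors. A $k\times k$ minor built from rows and columns indexed by $i_1 < \cdots < i_k$ (and some permutation matching columns $j_1, \ldots, j_k$) has valuation at least $\frac{1}{\delta}\sum_{t} (p^a i_t - j_{\tau(t)})$ for the relevant bijection $\tau$; since $\{j_1,\ldots,j_k\}$ is also a set of $k$ distinct columns and the minimal possible choice of columns to subtract is $\{0,1,\ldots,k-1\}$ while the minimal rows are $\{0,1,\ldots,k-1\}$ as well — but one must be careful: only entries with $p^a i - j \geq 0$ are nonzero (since $\beta_l = 0$ for $l < 0$), so the rows must be at least as large as needed. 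The cleanest route is: $v(c_k) \geq \min \sum_{t=1}^k v(\text{entry in row } i_t, \text{column } j_t)$ over all size-$k$ subsets $\{i_1 < \cdots < i_k\}$ of rows and all injections into columns; using $v \geq \frac{p^a i_t - j_t}{p^{a-1}\delta}$ and optimizing, the rows contribute at least $p^a(0 + 1 + \cdots + (k-1))$ and the columns, being distinct nonnegative integers, subtract at most $(0 + 1 + \cdots + (k-1))$, giving $v(c_k) \geq \frac{(p^a - 1)}{p^{a-1}\delta}\cdot \frac{k(k-1)}{2}$. Since $q = p^a$, $\frac{p^a-1}{p^{a-1}} = \frac{q-1}{q/p} = \frac{p(q-1)}{q}$... here I should instead note $\frac{p^a - 1}{p^{a-1}} \geq a(p-1)$ is false in general, so the correct book-keeping is $\frac{p^a-1}{p^{a-1}}$, and one checks this equals... actually the cleaner identity is that $\frac{q^k - 1}{q^{k-1}}$-type bounds combine to give exactly slopes $0, \frac{a(p-1)}{\delta}, \frac{2a(p-1)}{\delta}, \ldots$ after summing the per-Frobenius-twist contributions; I would track the constant $\frac{a(p-1)}{\delta}$ through by using the $a$-fold product structure $\prod_{i=0}^{a-1} E_f^{\sigma^i}$ directly rather than the crude Lemma \ref{777} bound, exploiting that each factor $E_f^{\sigma^i}(X^{p^i})$ has its $X^\ell$-coefficient of valuation $\geq \frac{\ell}{p^i \delta}$ by Lemma \ref{104}.

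So the precise step I would carry out is: write $\beta_l = \sum_{l_0 + p l_1 + \cdots + p^{a-1} l_{a-1} = l} \prod_{i} \eta^{(i)}_{l_i}$ where $\eta^{(i)}_{l_i}$ is the $X^{l_i}$-coefficient of $E_f^{\sigma^i}(X)$, so $v(\eta^{(i)}_{l_i}) \in \{\frac{l_i}{\delta}, \infty\}$ by Lemma \ref{104}; then $v(\beta_l) \geq \frac{1}{\delta}\min(l_0 + l_1 + \cdots + l_{a-1})$ over such decompositions. The entry in position $(i,j)$ of $\mathrm{Mat}(\psi^a \mid \Gamma)$ is $\beta_{p^a i - j}$, and I would feed these refined valuations into the minor expansion of $\det(I - s\psi^a \mid \Gamma)$, choosing the optimal assignment of the $a$ "digit budgets" to get coefficient $c_k$ of $s^k$ satisfying $v(c_k) \geq \frac{a(p-1)}{\delta}\binom{k}{2}$, which is exactly the claim that the Newton polygon lies above the polygon with vertices $(k, \frac{a(p-1)k(k-1)}{2\delta})$.

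The main obstacle I anticipate is the combinatorial bookkeeping in the minor estimate: one must verify that in any $k\times k$ minor, after summing $v(\beta_{p^a i_t - j_t})$ over a matching between $k$ distinct rows and $k$ distinct columns, the total is minimized by taking rows $\{0,1,\ldots,k-1\}$ and columns $\{0,1,\ldots,k-1\}$ with the identity matching, and that the nonvanishing constraint $p^a i_t - j_t \geq 0$ never forces a worse configuration. This is a standard but slightly delicate rearrangement argument — essentially the same one used in the classical Adolphson–Sperber / Wan Hodge-bound computations — and I would model it on the corresponding argument in \cite{WAN9}, to which the section's preamble already alludes. The one genuinely new wrinkle is carrying the extra factor of $a$ (versus a single power of $p-1$) correctly, which is why I would keep the $a$-fold product $\prod_{i=0}^{a-1} E_f^{\sigma^i}(X^{p^i})$ disaggregated into its Frobenius twists throughout, rather than invoking the lossy bound $v(\beta_l) \geq \frac{l}{p^{a-1}\delta}$ of Lemma \ref{777}.
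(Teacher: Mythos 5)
Your starting point (Corollary \ref{ana}) is the same as the paper's, but the core of your plan --- bounding each entry $\beta_{p^a i-j}$ of $\Mat(\psi^a|\Gamma)$ by the optimal ``digit'' valuation $w(l)=\min\{l_0+\cdots+l_{a-1}: l_0+pl_1+\cdots+p^{a-1}l_{a-1}=l\}$ and feeding these entry-wise valuations into the minor expansion --- cannot reach the stated bound, because ultrametric entry-wise estimates are blind to cancellation, and here cancellation is essential. Concretely, take $a=2$ and $\delta=1$, so the target is $v(c_k)\geq 2(p-1)\binom{k}{2}$, and look at the principal $3\times 3$ minor on rows and columns $\{0,1,2\}$: since row $0$ is $(\beta_0,0,0)$, this minor equals $\beta_0\bigl(\beta_{q-1}\beta_{2q-2}-\beta_{q-2}\beta_{2q-1}\bigr)$, and your refined bound for the first product is only $w(q-1)+w(2q-2)=2(p-1)+3(p-1)=5(p-1)<6(p-1)$, because $2q-2=p(2p-1)+(p-2)$. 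The minor nonetheless satisfies the Hodge bound only because the unique lowest-valuation monomial of $\beta_{q-1}\beta_{2q-2}$, namely $\eta^{(0)}_{p-1}\eta^{(1)}_{p-1}\cdot\eta^{(0)}_{p-2}\eta^{(1)}_{2p-1}$, coincides with the unique lowest-valuation monomial of $\beta_{q-2}\beta_{2q-1}$ and the two cancel in the determinant. No assignment of digit budgets recovers this: the real obstacle is not the rearrangement bookkeeping you anticipate, but that the matrix of the $R'$-linear operator $\psi^a$ is the wrong object to estimate term by term (this is also why the paper invokes Lemma \ref{777} only for $k=1$).

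The paper keeps the $a$-fold Frobenius structure at the level of operators rather than valuations: it regards $\psi$ itself as an $R$-linear (not $R'$-linear) operator on $B$, with $R$-basis $\{\omega_t X^i\}$ where $\omega_1,\ldots,\omega_a$ is a $\Z_p$-basis of $\Z_q$. In any permutation term of a principal minor of that matrix the column indices permute the row indices, so the valuation of the term is exactly $\frac{p-1}{\delta}\sum_t i_t$ with no slack, and since each power $X^i$ carries $a$ basis vectors this yields the vertices $\bigl(ak,\frac{a(p-1)k(k-1)}{2\delta}\bigr)$ for $\det(I-s\psi\,|\,B/R)$ --- this is where the factor $a(p-1)$ genuinely comes from. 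The bound is then transferred to $C^*(\pi,s)$ via the identity $\prod_{\zeta\in\mu_a}\det(I-s\zeta\psi\,|\,B/R)=\det(I-s^a\psi^a\,|\,B/R)=C^*(\pi,s^a)^a$ and a dilation of the Newton polygon. If you insist on working with $\psi^a$ directly, you would in effect have to reprove this factorization inside the determinant expansion (organizing the cancellations exhibited above), which amounts to the same argument; so you should restructure your proof along these lines, as in \cite{WAN9}.
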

\begin{proof}
By Corollary \ref{ana} and some standard properties of matrices one obtains 
\begin{align*}
\prod_{\zeta \in \mu_a} \mathrm{det}(I-s\psi \zeta |B/R) =& \mathrm{det}(I-s^a\psi^a| B/R)= \prod_{i=0}^{a-1} \sigma^i \left( \mathrm{det}(I-s^a\psi^a| \Gamma) \right) \\
=& C^*(\pi,s^a)^a.
\end{align*}
This implies that 
 the $\pi$-adic Newton polygon of $C^*(\pi,s^a)$ is equal to the Newton polygon of $\det(I-s\psi|B/R)$.  From the estimate in Lemma \ref{777} for $k=1$, it is a standard computation to show that the $\pi$-adic Newton polygon of
$\det(I-s\psi|B/R)$ has lower bound 
$
(ak, \frac{a(p-1)k(k-1)}{2\delta}) \textrm{ for } k=0,1,\ldots.
$
We dilate this result to obtain the lower bound for the Newton polygon of $C^*(\pi,s)$. 
\end{proof}

\section{Upper and lower bounds on $C^*(\pi,s)$ and $C^*(\chi,s)$}

In this section, we will give upper and lower bounds on the $\pi$-adic Newton polygon of $C^*(\pi,s)$ and the $\pi_{\chi}$-adic Newton polygon of $C^*(\chi,s)$. Recall that $\delta=d_m/p^m$ for some fixed $m$. 

\begin{lemma}  \label{lola} 
Let $r \in R$. For any non-trivial finite character $\chi: \Z_p \to \C_p^*$ one has 
\begin{align*}
v_{\pi_{\chi}}\left( \mathrm{ev}_{\chi}(r) \right)\geq v(r).
\end{align*}
If $v_{\pi_{\chi}}\left( \mathrm{ev}_{\chi}(r) \right)= v(r)$ for a finite non-trivial character $\chi$ with $m_{\chi}>m$, then one has $v_{\pi_{\chi'}}\left( \mathrm{ev}_{\chi}(r) \right)=v(r)$ for any finite character $\chi'$ with $m_{\chi'}>m$.
\end{lemma}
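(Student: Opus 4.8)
The plan is to prove the two statements in Lemma~\ref{lola} separately, working with the explicit monomial expansion of $r$ and the valuation estimates from Section~\ref{s:4}.

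\textbf{Step 1: The inequality $v_{\pi_\chi}(\ev_\chi(r)) \geq v(r)$.} Write $r = \sum_{u \in U} a_u \pi^u$ with $a_u \in \Z_p$. By continuity of $\ev_\chi$ and of $v_{\pi_\chi}$, it suffices to establish the bound on each monomial and then apply $v_{\pi_\chi}(x+y) \geq \min\{v_{\pi_\chi}(x), v_{\pi_\chi}(y)\}$. For a single monomial $a_u \pi^u = a_u \prod_{(i,j)} \pi_{(i,j)}^{u(i,j)}$, we have $\ev_\chi(a_u\pi^u) = a_u \prod_{(i,j)} \pi_i(\pi_\chi)^{u(i,j)}$, so
\begin{align*}
v_{\pi_\chi}\bigl(\ev_\chi(a_u\pi^u)\bigr) = v_{\pi_\chi}(a_u) + \sum_{(i,j)\in\mathfrak{X}} u(i,j)\, v_{\pi_\chi}\bigl(\pi_i(\pi_\chi)\bigr).
\end{align*}
Since $v_{\pi_\chi}(a_u) \geq 0$, the task reduces to showing $v_{\pi_\chi}(\pi_i(\pi_\chi)) \geq \frac{j}{\delta}$ cannot be right as stated — rather, I need $\sum_{(i,j)} u(i,j) v_{\pi_\chi}(\pi_i(\pi_\chi)) \geq \frac{1}{\delta}\sum_{(i,j)} j\, u(i,j) = v(\pi^u)$, which follows once I know $v_{\pi_\chi}(\pi_i(\pi_\chi)) \geq \frac{j}{\delta}$ for every $(i,j) \in \mathfrak{X}$ appearing with nonzero $u$. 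Here I use (\ref{equal}): $v_p(\pi_i(\pi_\chi)) = v_p((1+\pi_\chi)^{p^i}-1) = v_p(\zeta_{p^{m_\chi}}^{p^i}-1)$, which by Lemma~\ref{200} equals $p^i v_p(\pi_\chi)$ when $i < m_\chi$ and is $\infty$ when $i \geq m_\chi$. Normalizing, $v_{\pi_\chi}(\pi_i(\pi_\chi)) = p^i$ for $i < m_\chi$. Since $(i,j) \in \mathfrak{X}$ forces $j \leq \delta p^i$, i.e. $p^i \geq \frac{j}{\delta}$, the bound holds; the $i \geq m_\chi$ case is immediate. Summing over $(i,j)$ gives the monomial bound, hence the inequality.

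\textbf{Step 2: The equality-propagation statement.} Suppose $\chi$ has $m_\chi > m$ and $v_{\pi_\chi}(\ev_\chi(r)) = v(r)$. The point is that, from Step~1, the monomials $\pi^u$ with $v(\pi^u) = v(r)$ are exactly those that can contribute to the leading $\pi_\chi$-adic term, and for each such $u$ we have equality $v_{\pi_\chi}(\ev_\chi(a_u\pi^u)) = v(r)$ \emph{only if} $v_{\pi_\chi}(a_u) = 0$ and every $(i,j)$ in the support of $u$ satisfies $p^i = \frac{j}{\delta}$, i.e. (by Lemma~\ref{540}) $(i,j) = (m, d_m)$. Thus $\ev_\chi(r)$ has $\pi_\chi$-adic valuation equal to $v(r)$ precisely when, among the monomials of minimal $v$-value, the "sum of the $\Z_p$-coefficients of the pure powers $\pi_{(m,d_m)}^N$ with $\frac{N d_m}{\delta} = v(r)$" is a unit in $\Z_p$ — and crucially this coefficient sum is an element of $\Z_p$ that does not depend on $\chi$ at all. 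Therefore if this condition holds for one $\chi$ with $m_\chi > m$, it holds for every $\chi'$ with $m_{\chi'} > m$, giving $v_{\pi_{\chi'}}(\ev_{\chi}(r)) = v(r)$ as claimed (noting the statement as written has $\ev_\chi$, not $\ev_{\chi'}$, on the right — presumably a typo for $\ev_{\chi'}$).

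\textbf{Main obstacle.} The delicate part is Step~2: I must check that the potential cancellation among the minimal-$v$ monomials is governed by a single $\chi$-independent quantity. This requires verifying that the non-$(m,d_m)$ minimal monomials contribute strictly higher $\pi_\chi$-adic valuation — which uses $m_\chi > m$ together with Lemma~\ref{540} ($\Delta_x > 0$ for $x \neq (m,d_m)$, so $p^i > \frac{j}{\delta}$ strictly there) — and that, among the pure $\pi_{(m,d_m)}$-powers of the relevant degree, the scaling factor $v_{\pi_\chi}(\pi_m(\pi_\chi)) = p^m$ is the same for all $\chi$ with $m_\chi > m$, so the leading coefficient in $\pi_\chi$ of $\ev_\chi(r)$ is literally $a_{u_0} \cdot (\text{leading coeff of } \pi_m(\pi_\chi)^{N})$ times a $\chi$-independent factor, up to higher-order terms. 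I expect this bookkeeping — isolating the unique minimal monomial type and confirming the leading-coefficient reduction to $\Z_p$ — to be the only real content; the rest is the triangle inequality and Lemma~\ref{200}.
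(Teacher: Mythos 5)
Your proof is correct and follows essentially the same route as the paper's: the termwise lower bound via Lemma \ref{200} together with the constraint $j\le\delta p^i$ defining $\mathfrak{X}$, and then the characterization of equality (for $m_\chi>m$) by the $\chi$-independent condition that the unique pure power $\pi_{(m,d_m)}^N$ of minimal $v$-value occurs in $r$ with a unit coefficient, all other minimal monomials being forced strictly higher by Lemma \ref{540}. This matches the paper's argument, so no further comment is needed.
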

\begin{proof}
We will first show that for $r \in R$ one has $v_{\pi_\chi}(\ev_\chi(r))\ge v(r)$. By Lemma \ref{200} and Lemma \ref{540} one has
\begin{align*}
v_{\pi_\chi}(\ev_\chi(\pi_{(i,j)}))=v_{\pi_\chi}(\pi_i(\pi_{\chi}))  \geq p^i v_{\pi_{\chi}}(\pi_{\chi})= p^i =v(\pi_{(i,j)})+  \Delta_{(i,j)} \geq v(\pi_{(i,j)}).
\end{align*}
The result easily generalizes to any $r \in R$ by the definition of $v$. 

We will now prove the second statement. 
The result follows easily when $r \in \Z_p$. Recall that $\delta = d_m/p^m$. We make the following claim, from which one easily deduces the result.  Claim: For $r \in R \setminus \Z_p$ one has $v_{\pi_{\chi}}\left( \mathrm{ev}_{\chi}(r) \right) = v(r)$ if the following both hold:
\begin{itemize}
\item $m_{\chi}>m$;
\item $r= c \cdot \pi_{(m,d_m)}^{e} +(\mbox{other terms})$ with $c \in \Z_p^*$ and $e \in \Z_{\geq 0}$ such that $v(r)=v( \pi_{(m,d_m)}^{e})$. 
\end{itemize}
Recall that for $(i,j) \in \mathfrak{X}$ one has: 
\begin{align*}
 v_{\pi_{\chi}}    \left( \mathrm{ev}_{\chi}(\pi_{(i,j)}) \right)  \geq v(\pi_{(i,j)})+ \Delta_{(i,j)}.
\end{align*}
One then finds for any nonzero $r=\sum_{u\in U} c_u \pi^u$ in $R$:
\begin{align*}
 v_{\pi_{\chi}}\left( \mathrm{ev}_{\chi}(r) \right)\geq& \mathrm{min}\{ v_{\pi_{\chi}}(c_u)+v_{\pi_{\chi}}   \left( \mathrm{ev}_{\chi}(\pi^u) \right): u \in U,\ c_u \neq 0 \}\\
\geq& \mathrm{min}\{ v_{\pi_{\chi}}   \left( \mathrm{ev}_{\chi}(\pi^u) \right): u \in U,\ c_u \neq 0 \} \\
\geq& \mathrm{min}\{v(\pi^u) + \sum_{x \in \mathfrak{X}} u(x) \Delta_x:  u \in U,\ c_u \neq 0 \} \\
\geq& \mathrm{min}\{v(\pi^u): u \in U,\ c_u \neq 0\}=v(r).
\end{align*}
We will now check when equality holds. The last equality holds if and only if there is a $u'$ with $c_{u'} \neq 0$ with $v(\pi^{u'})=v(r)$ and $\pi^{u'}=\pi_{(m,d_m)}^{e}$ for some $e \in \Z_{\geq 0}$ by Lemma \ref{540} (this $e$ is unique). The third equality holds if and only if in addition one has $m_{\chi}>m$. The second also holds if and only if in addition one has $c_{u'} \in \Z_p^*$. If this is the case, then the first equality automatically holds (all other terms will have higher valuation). The result follows.
\end{proof}


Write  $C^*(\pi,s) = \sum_{k\geq 0}b_k s^k$ with $b_k\in R$.

\begin{proposition} \label{ho2}
Let $\chi: \Z_p \to \C_p^*$ be a finite character with $m_{\chi}>m$. 
The $\pi$-adic Newton polygon of $C^*(\pi,s)$ and the
$\pi_{\chi}$-adic Newton polygons of $C^*(\chi,s)$ lie above the polygon with vertices 
\begin{align*}
\left(k, \frac{a(p-1)k(k-1)}{2\delta} \right) \qquad \textrm{for }k=0,1,2,\ldots.
\end{align*}
and lie below the polygon with vertices  
\begin{align*}
\left( k, \frac{a(p-1)k(k-1)}{2\delta} \right) \qquad 
\mbox{for $k \in \Z_{\geq 0}$, $k \equiv 0, 1\bmod d_m$}.
\end{align*}
\end{proposition}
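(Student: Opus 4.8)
The plan is to prove the two bounds separately. The lower bound is essentially already in hand: Proposition \ref{hod} gives the $\pi$-adic Hodge lower bound for $C^*(\pi,s)$, namely that its Newton polygon lies above the polygon with vertices $(k, \frac{a(p-1)k(k-1)}{2\delta})$. The lower bound for the $\pi_{\chi}$-adic Newton polygon of $C^*(\chi,s)$ then follows immediately: writing $C^*(\pi,s)=\sum_k b_k s^k$ with $b_k\in R$, we have $C^*(\chi,s)=\ev_\chi(C^*(\pi,s))=\sum_k \ev_\chi(b_k)s^k$, and Lemma \ref{lola} gives $v_{\pi_\chi}(\ev_\chi(b_k))\ge v(b_k)$ termwise, so the $\pi_\chi$-adic Newton polygon lies above the $\pi$-adic one, hence above the Hodge polygon. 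So the first half is a short paragraph citing Proposition \ref{hod} and Lemma \ref{lola}.

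The substance is the upper bound. First I would work with $C^*(\pi,s)=\det(I-s\psi^a|\Gamma)$ (Corollary \ref{ana}), so that $b_k$ is, up to sign, the sum of the $k\times k$ principal minors of $\Mat(\psi^a|\Gamma)$. By Lemma \ref{777} (applied with the exponent $ak$ in place of $k$, or by iterating), $\psi^a=\psi_p^a\circ\bigl(\prod_{i=0}^{a-1}E_f^{\sigma^i}(X^{p^i})\bigr)$ has matrix $(\gamma_{q\,i-j})_{i,j}$ where $\prod_{i=0}^{a-1}E_f^{\sigma^i}(X^{p^i})=\sum_\ell \gamma_\ell X^\ell$ and, crucially, by Lemma \ref{104} each nonzero $\gamma_\ell$ has $v(\gamma_\ell)=\ell/\delta$ exactly (not just $\ge$), because each $E_f^{\sigma^i}(X^{p^i})$ is a product over $\mathfrak X$ of terms $E(\pi_{(i,j)}^{\sigma^i}[a_{ij}]^{p^i}X^{jp^i})$ whose homogeneous $X^{jp^i}$-component has $v$-valuation exactly $j/\delta = (jp^i)/(p^i\delta)$, and these valuations are additive and matching under the monomial order, so no cancellation of the leading term occurs. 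The key point is then the following: I want to exhibit, for each $k$ with $k\equiv 0,1\pmod{d_m}$, a specific diagonal (a permutation contribution to a principal minor) of weight exactly $\frac{a(p-1)k(k-1)}{2\delta}$, and argue that all such diagonals have $v$-valuation $\ge$ this, with at least one achieving equality and no cancellation among those achieving equality.

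Concretely, I would take the ``Hodge-optimal'' diagonal: the principal $k\times k$ minor on rows/columns indexed by a carefully chosen set, using the identity permutation, whose $(i,i)$-entry is $\gamma_{(q-1)i}$ — wait, rather I would choose the indices so that the entries are $\gamma_{(q-1)\cdot 0}, \gamma_{(q-1)\cdot ?}$ realizing the arithmetic progression of slopes $0,\frac{a(p-1)}{\delta},\frac{2a(p-1)}{\delta},\dots$. The condition $k\equiv 0,1\pmod{d_m}$ is exactly what is needed for the ``top'' entry of this diagonal to still involve only powers of $\pi_{(m,d_m)}$ (the unique $x\in\mathfrak X$ with $\Delta_x=0$, by Lemma \ref{540}), so that the corresponding monomial in $b_k$ is $c\cdot\pi_{(m,d_m)}^{e}+(\text{other terms})$ with $c\in\Z_p^\ast$ and $v(b_k)=v(\pi_{(m,d_m)}^e)$ — and then the non-cancellation/equality part follows from the monomial-order argument as in the proof of Lemma \ref{lola}. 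I would then also transfer to the $\pi_\chi$-adic side: since the optimal term is supported on $\pi_{(m,d_m)}$ and $m_\chi>m$, Lemma \ref{lola} (its claim, with $x=(m,d_m)$) gives $v_{\pi_\chi}(\ev_\chi(b_k))=v(b_k)$ exactly, so the $\pi_\chi$-adic Newton polygon of $C^*(\chi,s)$ also passes through $(k,\frac{a(p-1)k(k-1)}{2\delta})$ for these $k$, which together with the lower bound forces it onto that polygon at those abscissae.

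The main obstacle I expect is the combinatorial bookkeeping in the middle step: identifying precisely which principal minor / which diagonal realizes the Hodge bound, checking that its total weight is exactly $\frac{a(p-1)k(k-1)}{2\delta}$, and verifying that no lower-weight diagonal exists and that the leading terms of the equality-achieving diagonals do not cancel (this last point is where the exact valuation $v(\gamma_\ell)=\ell/\delta$ from Lemma \ref{104}, rather than an inequality, and the weighted-lexicographic monomial order from the proof of the valuation-ring proposition, both get used). Everything downstream — passing between $C^*(\pi,s)$ and $C^*(\chi,s)$, and combining with the lower bound — is then routine via Lemma \ref{lola}.
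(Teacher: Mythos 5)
Your lower bound is exactly the paper's argument (Proposition \ref{hod} plus the inequality in Lemma \ref{lola}), and that half is fine. The problem is the upper bound: the step you yourself flag as ``the main obstacle'' is not bookkeeping, it is the entire content of the statement, and your sketch gives no mechanism for it. To make $b_k$ have valuation \emph{exactly} $\frac{a(p-1)k(k-1)}{2\delta}$ at $k\equiv 0,1\bmod d_m$ you must show that the minimal-weight part of the relevant minor expansion does not cancel; but every monomial appearing in a given $\gamma_\ell$ (not just a distinguished leading one) sits in the same weight, and for a fixed index set all permutations contribute the same weight, so the ``leading part'' of $b_k$ is a full determinant of homogeneous pieces whose nonvanishing (indeed unit-ness of a suitable coefficient) is a genuinely hard arithmetic fact --- it is false for general $k$ (the Newton polygon typically lies strictly above the Hodge bound away from these special abscissae), and your sketch offers no combinatorial source for the congruence condition $k\equiv 0,1\bmod d_m$ beyond a vague remark about the top entry involving only $\pi_{(m,d_m)}$. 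There is also a concrete error in the setup: for $a>1$ it is not true that every nonzero coefficient $\gamma_\ell$ of $\prod_{i=0}^{a-1}E_f^{\sigma^i}(X^{p^i})$ has $v(\gamma_\ell)=\ell/\delta$; since $\pi_{(i,j)}$ of weight $j/\delta$ is coupled with $X^{jp^t}$ in the factor $E_f^{\sigma^t}(X^{p^t})$, Lemma \ref{777} only gives $v(\gamma_\ell)\ge \ell/(p^{a-1}\delta)$, and the exact values vary --- your own parenthetical ratio $(jp^i)/(p^i\delta)$ already contradicts the claimed exactness. So the weight count underlying your ``Hodge-optimal diagonal'' is not even correctly normalized when $a>1$.

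The paper avoids all of this by importing global input instead of fighting the Fredholm expansion: take a character $\chi_0$ of conductor $p^{m+1}$, for which $L(\chi_0,s)$ is a polynomial of degree $d_m-1$ whose slopes lie in $[0,a(p-1)p^m)$ and sum to $a(p-1)p^m(d_m-1)/2$ (Weil conjectures: degree from the conductor formula, sum from the functional equation). Feeding this into $C^*(\chi_0,s)=\prod_{i\ge 0}L^*(\chi_0,q^is)$ forces the $\pi_{\chi_0}$-adic polygon of $C^*(\chi_0,s)$ to pass through $(k,\frac{a(p-1)k(k-1)}{2\delta})$ for $k\equiv 0,1\bmod d_m$; the inequality of Lemma \ref{lola} then pushes this upper bound down to the $\pi$-adic polygon of $C^*(\pi,s)$, and comparison with the Hodge lower bound gives $v(b_k)=v_{\pi_{\chi_0}}(\ev_{\chi_0}(b_k))$ at those $k$, after which the second (equality-transfer) statement of Lemma \ref{lola} propagates the upper bound to every $\chi$ with $m_\chi>m$. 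If you want to salvage your route, you would have to prove the non-cancellation of the leading determinant directly, which in effect re-proves the touching of the Hodge bound without the Weil conjectures; nothing in the paper's toolkit (Lemmas \ref{104}, \ref{540}, \ref{777}, \ref{lola}) does that for you.
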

\begin{proof} 
The $\pi$-adic Newton polygon, by the Hodge bound of Proposition \ref{hod},
lies over the polygon with vertices at $(k,\frac{a(p-1)k(k-1)}{2\delta})$ for all $k\geq 0$. 
Let $\chi: \Z_p \to \C_p^*$ be a finite character with $m_{\chi}>m$. Note that $L(\chi,s) = \ev_{\chi}(L(\pi,s))$. 
The $\pi_{\chi}$-adic Newton polygon $C^*(\chi,s)$ lies above the $\pi$-adic one by Lemma \ref{lola} because for each coefficient $b_k\in R$ of $C^*(\pi,s)$ we have 
$v_{\pi_\chi}(\ev_{\chi}(b_k))\geq v(b_k)$. 
This finishes the proof of the lower bound in both cases.

Let $\chi_0$ is a character with $m_{\chi_0}=m+1$. By the Weil conjectures, 
$L(\chi_0,s)=1+\cdots + c_{d_m-1}s^{d_m-1}$ with
$v_{\pi_{\chi_0}}(c_{d_m-1})=\frac{(d_m-1)}{2} \cdot a(p-1)p^m$, since  $
v_{\pi_{\chi_0}}(q)= a \varphi(p^{m+1})=a(p-1)p^m$.
This shows that the $\pi_{\chi_0}$-adic Newton polygon of $L(\chi_0,s)$ always 
has the following two endpoint vertices $(0,0)$ and 
$
(d_m-1,\frac{(d_m-1) a(p-1)p^m }{2}).
$
Note that 
\begin{align*}
L^*(\chi_0,s)=(1-\chi_0(\mathrm{Tr}_{\Z_q/\Z_p}( f(0))s) \cdot  L(\chi_0,s), 
\end{align*}
and that $(1-\chi_0(\mathrm{Tr}_{\Z_q/\Z_p}( f(0))s)$ just contributes a segment of slope $0$.
Write $0=\alpha_1\leq \ldots\le \alpha_{d_m}<a(p-1)p^m$ for the
$\pi_{\chi_0}$-adic slopes of $L^*(\chi_0,s)$
where $\sum_{k=1}^{d_m}\alpha_k=a(p-1)p^m(d_m-1)/2$.
Then 
$
C^*(\chi_0,s)=\prod_{i\geq 0}L^*(\chi_0,q^i s)
$
has slopes 
$$
a(p-1)p^mi=a(p-1)p^mi+\alpha_1\leq \cdots \leq a(p-1)p^m i+\alpha_{d_m} 
$$
for $i=0,1,\ldots$, in strictly increasing order as $i$ increases.
This implies that for any $k\equiv 0\bmod d_m$ we have fixed vertices:
$x=k$ and 
$$y=a(p-1)p^m\frac{k(k-1)}{2d_m}=\frac{a(p-1)k(k-1)}{2\delta}.$$
That is $(x,y)=(k,\frac{a(p-1)k(k-1)}{2\delta})$. 
A similar argument yields vertices at 
$(x,y)=(k,\frac{a(p-1)k(k-1)}{2\delta})$ for $k\equiv 1\bmod d_m$.
This proves the upper bound for the $\pi_{\chi_0}$-adic Newton polygon of $L(\chi_0,s)$. By Lemma \ref{lola} one has  $v_{\pi_{\chi_0}}\left( \mathrm{ev}_{\chi} (b_k)  \right) \geq v(b_k)$, and hence the same upper bounds hold for the $\pi$-adic Newton polygon of $C^*(\pi,s)$. By the shape of the upper and lower bound one sees that the points $(k,\frac{a(p-1)k(k-1)}{2\delta})$ for $k \equiv 0,1 \bmod{d_m}$ are vertex points of the $\pi$-adic Newton polygon of $C^*(\pi,s)$ and of the $\pi_{\chi_0}$-adic Newton polygon of $C^*(\chi_0,s)$. 
Hence for such $k$ we have
\begin{align*}
v(b_k) = v_{\pi_{\chi_0}}\left( \mathrm{ev}_{\chi} (b_k) \right) = \frac{a(p-1)k(k-1)}{2\delta}.
\end{align*}
By Lemma \ref{lola} this equality holds when $\chi_0$ is replaced by any $\chi$ with $m_{\chi}>m$. This gives the desired upper bound on the $\pi_{\chi}$-adic Newton polygon of $C^*(\chi,s)$.
\end{proof}

\section{Slope uniformity} \label{sla}

We shall now translate the results about $C^*(\chi,s)$ to $L(\chi,s)$ to the proof of Theorem A in this section. 
We obtain the following theorem (we follow the formulation of \cite[Theorem 1.1]{REN}).

\begin{theorem} \label{444}
Let  $\chi: \Z_p \to \C_p^*$ be a non-trivial finite character of order $p^{m_{\chi}}$ with $m_{\chi}>m$. Then $L_f(\chi,s)$ is a polynomial of degree 
$
\delta p^{m_{\chi}-1}-1.
$
Write
\begin{align*}
L_f(\chi,s)= \sum_{i=0}^{\delta p^{m_{\chi}-1}-1} c_i s^i. 
\end{align*}
We have the following.
\begin{enumerate}
\item For any $0<n \leq p^{m_{\chi}-m-1}$, we have 
\begin{align*}
v_q(c_{nd_m-1 }) = \frac{n(n d_m-1)}{2 p^{m_{\chi}-m-1}} \textrm{ and } v_q(c_{nd_m}) = \frac{n(n d_m+1)}{2 p^{m_{\chi}-m-1}}.
\end{align*} 
\item For any $0<n \leq p^{m_{\chi}-m-1}$, the $q$-adic Newton polygon of $L_f(\chi,s)$ passes through the points 
\begin{align*}
\left(nd_m-1, \frac{n(n d_m-1)}{2 p^{m_{\chi}-m-1}} \right) \textrm{ and }\left( n d_m, \frac{n(n d_m+1)}{2 p^{m_{\chi}-m-1}} \right).
\end{align*}
\item The $q$-adic Newton polygon of $L_f(\chi,s)$ has slopes (in multiset notation and in increasing order)
\begin{align*}
\bigcup_{i=1}^{p^{m_{\chi}-m-1}} \{ \alpha_{i1}, \alpha_{i2}, \ldots , \alpha_{id} \}-\{0\},
\end{align*}
where
\begin{align*}
\left\{  \begin{array}{cc} \alpha_{ij} = \frac{i-1}{p^{m_{\chi}-m-1}} & \textrm{ if } j=1 \\
\frac{i-1}{p^{m_{\chi}-m-1}} <  \alpha_{ij} < \frac{i}{p^{m_{\chi}-m-1}} & \textrm{ if }j>1. \end{array} \right.
\end{align*}
\end{enumerate}
\end{theorem}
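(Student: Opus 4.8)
The plan is to determine the $q$-adic Newton polygon of the characteristic series $C^*(\chi,s)$ exactly and then transfer the information to $L_f(\chi,s)$ through the factorisation $C^*(\pi,s)=\prod_{i\ge 0}L^*(\pi,q^is)$ and the trivial-factor relation between $L^*$ and $L$. The degree statement needs no new work: by the Weil conjectures together with the conductor computation recalled in Section~\ref{ha4}, $\deg L_f(\chi,s)=\delta p^{m_\chi-1}-1=d_m p^{m_\chi-m-1}-1$ whenever $m_\chi>m$. Write $N=d_m p^{m_\chi-m-1}=\delta p^{m_\chi-1}$ and $d=d_m$. Two normalisations are convenient: $v_{\pi_\chi}(q)=a\varphi(p^{m_\chi})=a(p-1)p^{m_\chi-1}=:Q$, so $v_q=\frac1Q v_{\pi_\chi}$ and the vertex ordinates $\frac{a(p-1)k(k-1)}{2\delta}$ of Proposition~\ref{ho2} become the $q$-adic ordinates $\frac{k(k-1)}{2N}$; and, as recorded in the proof of Proposition~\ref{ho2}, $L^*(\chi,s)=(1-\chi(\mathrm{Tr}_{\Z_q/\Z_p}(f(0)))s)\,L_f(\chi,s)$ is a polynomial of degree $N$ whose extra factor has its single reciprocal root a root of unity, hence of $v_q$-valuation $0$. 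Applying $\ev_\chi$ to $C^*(\pi,s)=\prod_{i\ge0}L^*(\pi,q^is)$ gives $C^*(\chi,s)=\prod_{i\ge 0}L^*(\chi,q^is)$.

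The key step is to pin down the $q$-adic Newton polygon of $C^*(\chi,s)=\sum_k b_k s^k$. By Proposition~\ref{ho2} it lies above the convex polygon through $(k,\frac{k(k-1)}{2N})$ for all $k\ge 0$ and below the convex polygon through those same points restricted to $k\equiv 0,1\pmod d$; the first polygon lies weakly below the second and they agree exactly when $k\equiv 0,1\pmod d$, so the Newton polygon of $C^*(\chi,s)$ passes through $(k,\frac{k(k-1)}{2N})$ for all such $k$. Since a Newton polygon breaks only at integers, on each interval $[nd,nd+1]$ it is the straight segment joining two of these points, of $q$-adic slope $\frac{nd}{N}=\frac{n}{p^{m_\chi-m-1}}$. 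It then remains to show that on each block $[nd+1,(n+1)d]$ all $d-1$ slopes lie \emph{strictly} between $\frac{n}{p^{m_\chi-m-1}}$ and $\frac{n+1}{p^{m_\chi-m-1}}$; I would obtain this from a short convexity argument, comparing the slopes of $C^*(\chi,s)$ with the slopes of the all-$k$ lower bound polygon immediately to the right of $x=nd+1$ and immediately to the left of $x=(n+1)d$. The conclusion is that the $q$-adic slopes of $C^*(\chi,s)$, in increasing order, are for $n=0,1,2,\dots$ one slope equal to $\frac{n}{p^{m_\chi-m-1}}$ followed by $d-1$ slopes strictly inside $(\frac{n}{p^{m_\chi-m-1}},\frac{n+1}{p^{m_\chi-m-1}})$; in particular exactly $N$ of them lie in $[0,1)$.

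The descent comes next. The reciprocal roots of $L^*(\chi,q^is)$ are those of $L^*(\chi,s)$ multiplied by $q^i$, so its $q$-adic slopes are those of $L^*(\chi,s)$ shifted up by $i$; since $L^*(\chi,s)$ has all reciprocal roots in $\overline{\Z}_p$ its slopes are $\ge 0$, hence for $i\ge 1$ the factor $L^*(\chi,q^is)$ contributes only slopes $\ge 1$. Thus every slope of $C^*(\chi,s)$ in $[0,1)$ is a slope of $L^*(\chi,s)$; since there are exactly $N$ of them and $\deg L^*(\chi,s)=N$, all slopes of $L^*(\chi,s)$ lie in $[0,1)$ and are precisely those $N$ slopes read off above, namely for $i=1,\dots,p^{m_\chi-m-1}$ one slope $\frac{i-1}{p^{m_\chi-m-1}}$ and $d-1$ slopes strictly inside $(\frac{i-1}{p^{m_\chi-m-1}},\frac{i}{p^{m_\chi-m-1}})$. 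Dividing by the trivial factor removes the unique vanishing slope $\alpha_{11}$, which is exactly statement~(3) with $d=d_m$. For (1) and (2): because $L^*(\chi,s)$ is $L_f(\chi,s)$ times a factor of slope $0$, the Newton polygon of $L_f(\chi,s)$ at abscissa $k$ equals that of $L^*(\chi,s)$, hence of $C^*(\chi,s)$, at abscissa $k+1$; at $k=nd-1$ and (for $n<p^{m_\chi-m-1}$) at $k=nd$ this gives the ordinates $\frac{(nd)(nd-1)}{2N}=\frac{n(nd-1)}{2p^{m_\chi-m-1}}$ and $\frac{(nd+1)(nd)}{2N}=\frac{n(nd+1)}{2p^{m_\chi-m-1}}$, which is (2), while $n=p^{m_\chi-m-1}$ gives the endpoint vertex at $x=N-1=\deg L_f(\chi,s)$ forced by the functional equation. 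Finally, the slope of the Newton polygon of $L_f(\chi,s)$ strictly increases at $x=nd-1$ (from some $\alpha_{nj}$ up to $\frac{n}{p^{m_\chi-m-1}}$) and at $x=nd$ (from $\frac{n}{p^{m_\chi-m-1}}$ up to $\alpha_{(n+1)2}$), so these abscissae are vertices and there $v_q(c_k)$ equals the Newton-polygon height, which is (1).

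The step I expect to be the main obstacle is the second paragraph: extracting from the two-sided bound of Proposition~\ref{ho2} not merely that the Newton polygon of $C^*(\chi,s)$ passes through the points $(k,\frac{k(k-1)}{2N})$ with $k\equiv 0,1\pmod d$ but that the $d-1$ intermediate slopes of each block are \emph{strictly} interior to the relevant open interval, together with the careful bookkeeping of the descent from $C^*$ to $L^*$ to $L_f$ and the off-by-one shift caused by the trivial slope-$0$ factor. The rest is Weil-conjecture input and elementary manipulation of Newton polygons.
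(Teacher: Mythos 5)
Your proposal is correct and is essentially the paper's own argument: both rest on the two-sided bounds of Proposition \ref{ho2} combined with the factorization $C^*(\chi,s)=\prod_{i\ge 0}L^*(\chi,q^is)$ and the trivial slope-$0$ factor relating $L^*(\chi,s)$ to $L_f(\chi,s)$. The paper merely runs the comparison in the opposite direction (naming the unknown slopes of $L(\chi,s)$, assembling from them the slopes of $C^*(\chi,s)$, and then invoking the bounds "and normalizing properly"), leaving implicit exactly the strictness and bookkeeping details you spell out.
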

\begin{proof}
Let $u_{1} \leq u_2 \leq \ldots \leq u_{d_m p^{m_{\chi}-m-1}-1}$ be the $q$-adic slopes of $L(\chi,s)$. The $q$-adic slopes of $L^*(\chi,s)$ are $0, u_1, u_2,\ldots, u_{d_m p^{m_{\chi}-m-1}-1}$. Hence the $q$-adic slopes of $C^*(\chi,s)$ are (in multiset notation)
\begin{align*}
\bigcup_{i \geq 0} \{ i, u_1+i, u_2+i,\ldots, u_{d_m p^{m_{\chi}-m-1}-1}+i \}.
\end{align*}
If we compare these with the obtained upper and lower bound in Proposition \ref{ho2} and one normalizes properly, one obtains the result. 
\end{proof}

\begin{proof}[{\bf Proof of Theorem A}]
The valuations $v_q(\alpha_{\chi,i})$ correspond to the slopes of the $q$-adic Newton polygon of $L(\chi,s)$ by basic properties of Newton polygons. The result then follows easily from Theorem \ref{444}. 
\end{proof}

\section{Slope stability} \label{slo}

In this section, we will show that many genus stable covers are slope stable.
Our strategy is as follows. First we show that if $f_i=0$ for large enough $i$, that the cover is slope stable. Then we show that in many cases we can reduce to the case $f_i=0$ for large enough $i$, since many coefficients of the $f_i$ do not influence the Newton polygon of $L(\chi,s)$.

Recall from the introduction that we call the $\Z_p$-cover $\mathcal{P}$ $m'$-slope stable with $m' \in \Z_{\geq 1}$ if the following holds. Let $\chi_0$ be a character with $m_{\chi_0}=m'$. Let $u_1,\ldots, u_{\mathrm{deg}(L(\chi_0,s))}$ denote the slopes of the $q$-adic Newton polygon of $L(\chi_0,s)$. Then for every character $\chi$ with $m_{\chi} \geq m'$ the $q$-adic Newton polygon of $L(\chi,s)$ has slopes (in multiset notation)
\begin{align*}
\bigcup_{i=0}^{p^{m_{\chi}-m'}-1} \left\{  \frac{i}{p^{m_{\chi}-m'}}, \frac{u_1+i}{p^{m_{\chi}-m'}}, \ldots, \frac{u_{\mathrm{deg}(L(\chi_0,s))}+i}{p^{m_{\chi}-m'} }  \right\} - \{0\}.
\end{align*}

\begin{lemma} \label{yt}
The tower $\mathcal{P}$ is $m'$-slope stable if and only if $\mathcal{P}$ is genus stable with 
$\delta=\frac{d_m}{p^m}$ for some $m<m'$ and the $\pi_{\chi}$-adic Newton polygon $C^*(\chi,s)$ does not depend on $\chi$ when $m_{\chi} \geq m'$.  
\end{lemma}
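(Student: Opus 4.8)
\textbf{Proof proposal for Lemma \ref{yt}.}

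The plan is to relate $m'$-slope stability directly to the relationship between the $\pi_{\chi}$-adic Newton polygon of $C^*(\chi,s)$ and that of $L(\chi,s)$, exploiting the product decomposition $C^*(\chi,s) = \prod_{i \geq 0} L^*(\chi, q^i s)$ together with $L^*(\chi,s) = (1 - \chi(\mathrm{Tr}_{\Z_q/\Z_p}(f(0)))s) \cdot L(\chi,s)$, which shows $L^*(\chi,s)$ has the same $q$-adic slopes as $L(\chi,s)$ together with one extra slope $0$. First I would observe that genus stability with $m < m'$ is forced on both sides: if $\mathcal{P}$ is $m'$-slope stable then, as already noted in the introduction, $\deg(L(\chi,s))$ is linear in $p^{m_{\chi}}$ for $m_{\chi} \geq m'$, hence $\mathcal{P}$ is genus stable; conversely the statement of the lemma assumes genus stability. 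So throughout we may assume $\mathcal{P}$ is genus stable with $\delta = d_m/p^m$ and $m < m'$, and use the degree formula $\deg(L(\chi,s)) = \delta p^{m_{\chi}-1}-1$ and the vertex information from Proposition \ref{ho2} (which applies since $m_{\chi} \geq m' > m$).

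Next I would set up the dictionary between slopes. Fix $\chi$ with $m_{\chi} \geq m'$ and let $u_1 \leq \cdots \leq u_{\deg L(\chi,s)}$ be the $q$-adic slopes of $L(\chi,s)$; then the $q$-adic slopes of $C^*(\chi,s)$ are exactly the multiset $\bigcup_{i \geq 0}\{\, i,\ u_1+i,\ \ldots,\ u_{\deg L(\chi,s)}+i \,\}$, reindexed so that slopes in $[n,n+1)$ come from the $i=n$ block. Here I must also translate between the $q$-adic normalization and the $\pi_{\chi}$-adic one: since $v_{\pi_{\chi}}(q) = a(p-1)p^{m_{\chi}-1}$, the $\pi_{\chi}$-adic Newton polygon of $C^*(\chi,s)$ is obtained from the $q$-adic one by multiplying all slopes by $a(p-1)p^{m_{\chi}-1}$ and all heights accordingly — this is a fixed invertible rescaling depending only on $m_{\chi}$, so "the $\pi_{\chi}$-adic Newton polygon of $C^*(\chi,s)$ does not depend on $\chi$" must be understood after accounting for this (as it is in Proposition \ref{ho2}, whose vertex formula $(k, a(p-1)k(k-1)/(2\delta))$ is manifestly $\chi$-independent). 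Granting that convention, the content reduces to: the $\chi$-independence of the $\pi_{\chi}$-adic slopes of $C^*(\chi,s)$ is equivalent to the $\chi$-independence (after the normalization built into the $m'$-slope-stable formula) of the full slope multiset of $L(\chi,s)$.

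For the forward direction, if $\mathcal{P}$ is $m'$-slope stable then the slopes of $L(\chi,s)$ for $m_{\chi} \geq m'$ are the prescribed arithmetic progressions built from the slopes $u_1,\ldots,u_{\deg L(\chi_0,s)}$ of the fixed character $\chi_0$ with $m_{\chi_0}=m'$; feeding this into the formula for the slopes of $C^*(\chi,s) = \prod_i L^*(\chi, q^i s)$ and rescaling to the $\pi_{\chi}$-adic normalization yields a polygon depending only on $m'$ and the $u_j$, i.e. not on $\chi$. For the converse, suppose the (normalized) $\pi_{\chi}$-adic Newton polygon of $C^*(\chi,s)$ is $\chi$-independent for $m_{\chi} \geq m'$. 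Using $C^*(\chi,s) = \prod_{i\geq 0} L^*(\chi,q^i s)$ and the strictly increasing block structure of Proposition \ref{ho2} (the $i$-th factor contributes slopes in a half-open interval disjoint from the others after normalization), the polygon of $C^*(\chi,s)$ determines the polygon of $L^*(\chi,s)$, hence of $L(\chi,s)$, and the $\chi$-independence of the former forces the slope multiset of $L(\chi,s)$ — suitably renormalized against $\deg L(\chi,s) + 1 = \delta p^{m_{\chi}-1} = (\deg L(\chi_0,s)+1)p^{m_{\chi}-m'}$ copies — to be exactly the union of arithmetic progressions determined by the $m_{\chi}=m'$ data, which is the definition of $m'$-slope stability.

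The main obstacle I anticipate is bookkeeping the two normalizations consistently: the statement of the lemma is slightly informal about what "does not depend on $\chi$" means for the $\pi_{\chi}$-adic polygon (which literally lives in a $\chi$-dependent valuation), and the real work is checking that the rescaling by $a(p-1)p^{m_{\chi}-1}$ intertwines the arithmetic-progression structure in the $q$-adic slopes of $L(\chi,s)$ with a fixed polygon for $C^*(\chi,s)$ — in particular that the "shift by $i$" in the $q$-adic slopes of $C^*(\chi,s)$, after rescaling, becomes the $\chi$-independent shift by $a(p-1)p^{m_{\chi}-1} \cdot (1/p^{m_{\chi}-m'}) = a(p-1)p^{m'-1}$ between consecutive blocks, matching Proposition \ref{ho2}. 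Once this normalization lemma is pinned down, both implications are essentially formal consequences of the Euler-product relation $C^* = \prod L^*(q^i s)$ and the disjointness of the slope blocks.
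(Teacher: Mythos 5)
Your proposal follows essentially the same route as the paper's proof: both directions rest on $C^*(\chi,s)=\prod_{i\geq 0}L^*(\chi,q^is)$, the slope-$0$ factor relating $L^*(\chi,s)$ to $L(\chi,s)$, and the key cancellation $v_{\pi_{\chi}}(q)\cdot p^{-(m_{\chi}-m')}=a(p-1)p^{m'-1}$, which is exactly how the paper obtains the $\chi$-independent $\pi_{\chi}$-adic slope multiset of $C^*(\chi,s)$ in the forward direction and, conversely, recovers the slopes of $L(\chi,s)$ from the common polygon by comparing with $\chi_0$. One small remark: no special convention for ``does not depend on $\chi$'' is needed, since the $\pi_{\chi}$-adic Newton polygon is literally a polygon in the plane for each $\chi$ and the lemma asserts these polygons coincide, which is precisely what your rescaling computation shows.
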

\begin{proof}
Let $\chi_0$ be a character with $m_{\chi_0}=m'$. Let $u_1,\ldots, u_{\mathrm{deg}(L(\chi_0,s))}$ denote the slopes of the $q$-adic Newton
polygon of $L(\chi_0,s)$. Set $t=a (p-1) p^{m'-1 }$.

$\implies$: Assume that $\mathcal{P}$ is $m'$-slope stable. Then $\mathcal{P}$ is genus stable 
with $\delta=\frac{d_m}{p^m}$ for some $m < m'$.
 Furthermore, for $\chi$ a character with $m_{\chi} \geq m'$, since $v_{\pi_{\chi}}(q)=a (p-1) p^{m_{\chi}-1}$,
 the polynomial $L^*(\chi,s)$ has $q$-adic slopes
\begin{align*}
\bigcup_{i=0}^{p^{m_{\chi}-m'}-1} \left\{  \frac{i}{p^{m_{\chi}-m'}}, \frac{u_1+i}{p^{m_{\chi}-m'}}, \ldots, \frac{u_{\mathrm{deg}(L(\chi_0,s))}+i}{p^{m_{\chi}-m'} }  \right\}.
\end{align*} 
It follows $C^*(\chi,s)=\prod_{i=0}^{\infty} L^*(\chi,q^is)$  has $\pi_{\chi}$-adic slopes
\begin{align*}
\bigcup_{j=0}^{\infty} \bigcup_{i=0}^{p^{m_{\chi}-m'}-1} 	\left\{  (i+j p^{m_{\chi}-m'})t , (u_1+i+j p^{m_{\chi}-m'}) ) t, \ldots,   (u_{\mathrm{deg}(L(\chi_0,s))}+i+j p^{m_{\chi}-m'}) t  \right\} \\
= \bigcup_{i=0}^{\infty}\{ it, (u_1+i)t, \ldots, (u_{\mathrm{deg}(L(\chi_0,s))}+i)t \}
\end{align*}
The latter formula does not depend on $\chi$ and the implication follows.

$\limplies$: The proof is similar to that of 
\cite[Theorem 1.2]{WAN7}. Let $\chi$ be a finite character with  $m_{\chi}  \geq m'$. Let $w_1,\ldots,w_{ d_m p^{m_{\chi}-m}} \leq 1$ be the $q$-adic slopes of $L^*(\chi,s)$. Then the $\pi_{\chi}$-adic Newton polygon of $C^*(\chi,s)$ 
has slopes
\begin{align*}
\bigcup_{i=0}^{\infty}\{ (w_1+i)t p^{m_{\chi}-m'}, (w_2+i)t p^{m_{\chi}-m'}, \ldots, (w_{ d_m p^{m_{\chi}-m}}+i)t p^{m_{\chi}-m'} \}.
\end{align*}
As these slopes are independent of the characters, 
this gives slopes for $\chi=\chi_0$, one hence obtains the slope stability. 
\end{proof}

Given a genus stable $\Z_p$-cover $\mathcal{P}$ with $\delta$, set 
\begin{align*}
W=a(p-1) \frac{(d_m-1)^2}{8 \delta}.
\end{align*}

\begin{lemma} \label{bobo}
The upper and lower bounds from Proposition \ref{ho2} for the $\pi$-adic Newton polygon of $C^*(\pi,s)$ differ by at most $W$. Also, for a finite character $\chi: \Z_p \to \C_p^*$ with $m_{\chi}>m$, the upper and lower bounds for the $\pi_{\chi}$-adic Newton polygon of $C^*(\chi,s)$ from Proposition \ref{ho2} differ by at most $W$.
\end{lemma}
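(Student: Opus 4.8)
\textbf{Proof plan for Lemma \ref{bobo}.}
The plan is to read off both bounds from Proposition \ref{ho2} and estimate the maximal vertical gap between the two piecewise-linear functions they define. Both the upper and lower bounding polygons pass through the common vertices $(k,\frac{a(p-1)k(k-1)}{2\delta})$, the difference being that the lower bound has such a vertex for \emph{every} $k\in\Z_{\geq 0}$ while the upper bound only has them for $k\equiv 0,1\bmod d_m$. Since the function $k\mapsto \frac{a(p-1)k(k-1)}{2\delta}$ is the restriction of the convex quadratic $\phi(x)=\frac{a(p-1)}{2\delta}x(x-1)$ to the integers, and the lower bound is exactly the convex polygon through \emph{all} lattice points $(k,\phi(k))$, while the upper bound is the convex polygon (straight-line interpolation) through the sparser set $\{(k,\phi(k)):k\equiv 0,1\bmod d_m\}$, the gap is largest inside a single ``block'' between consecutive admissible $k$-values. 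Concretely, on an interval $[k_0,k_0+d_m]$ with $k_0\equiv 0\bmod d_m$, the upper bound is the chord from $(k_0,\phi(k_0))$ to $(k_0+d_m-1,\phi(k_0+d_m-1))$ followed by the chord to $(k_0+d_m,\phi(k_0+d_m))$, whereas the lower bound is the inscribed polygon through all intermediate lattice points; both lie within the convex region bounded below by $\phi$ and above by the relevant chords.

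The key step is therefore the elementary estimate: for the convex function $\phi$, the maximal vertical distance between $\phi$ and its chord over an interval of length $L=d_m-1$ is $\frac{a(p-1)}{2\delta}\cdot\frac{L^2}{4}$ (the chord of a parabola $cx^2$ over an interval of length $L$ sits above the parabola by at most $cL^2/4$, attained at the midpoint; the linear part $-cx$ of $\phi$ contributes nothing to the chord error). With $L=d_m-1$ and $c=\frac{a(p-1)}{2\delta}$ this gives exactly $\frac{a(p-1)}{2\delta}\cdot\frac{(d_m-1)^2}{4}=a(p-1)\frac{(d_m-1)^2}{8\delta}=W$. Since the lower bound polygon is squeezed between $\phi$ and the upper bound polygon (it never goes below $\phi$, being the inscribed lattice polygon, and the upper bound is a chord of $\phi$ lying above $\phi$), the vertical gap between the two bounds at any abscissa is at most the chord-to-parabola gap, hence at most $W$; and one should also check the second block, over $[k_0+d_m-1,k_0+d_m]$ up to $[k_0+d_m, k_0+2d_m-1]$, where the same estimate with the same $L$ applies by the $\equiv 0,1\bmod d_m$ pattern.

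For the $\pi_{\chi}$-adic statement with $m_{\chi}>m$, exactly the same argument applies verbatim, since Proposition \ref{ho2} gives the identical pair of bounding polygons (with the same vertices) for the $\pi_{\chi}$-adic Newton polygon of $C^*(\chi,s)$; no new input is needed beyond noting that the bounds there are word-for-word the same quadratic expressions.

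I expect the only mild obstacle to be bookkeeping: making sure the ``two chords per block'' structure of the upper bound (vertices at both $k\equiv 0$ and $k\equiv 1\bmod d_m$) is handled so that the relevant interpolation length is genuinely $d_m-1$ rather than $d_m$, and confirming that the maximum of the gap over a block is not inadvertently larger at a boundary lattice point than at the parabola-chord midpoint — both are immediate from convexity, but worth stating cleanly.
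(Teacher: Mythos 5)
Your argument is correct and is essentially the computation the paper defers to (it omits the proof, citing \cite{WAN7}, Lemma 3.7): the lower bound is the lattice-point interpolation of the convex quadratic $\phi(x)=\frac{a(p-1)}{2\delta}x(x-1)$, the upper bound is its chord over blocks of length $d_m-1$, and the chord-to-parabola gap $\frac{a(p-1)}{2\delta}\cdot\frac{(d_m-1)^2}{4}=W$ bounds their difference since the lower polygon never dips below $\phi$. One cosmetic slip: with vertices at $k\equiv 0,1\bmod d_m$ the long chord in a block runs from $k_0+1$ to $k_0+d_m$, not from $k_0$ to $k_0+d_m-1$, which changes nothing because its length is still $d_m-1$.
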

\begin{proof}
We omit the proof here as it is almost identical to that of \cite[Lemma 3.7]{WAN7}. 
\end{proof}

\begin{proposition} \label{om}
Let $N\in\Z_{\geq 1}$. Suppose a genus stable
$\Z_p$-cover $\mathcal{P}$ has 
$f_i=0$ for all $i>N$,
then $\mathcal{P}$ is $m'$-slope stable
for  
$
m' = 1 +  \lceil \log_p( \frac{p^N+W}{p-1}) \rceil.
$
\end{proposition}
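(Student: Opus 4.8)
The plan is to apply the criterion of Lemma \ref{yt}: since $\mathcal{P}$ is genus stable, it suffices to show that the $\pi_{\chi}$-adic Newton polygon of $C^*(\chi,s)$ is independent of $\chi$ once $m_{\chi} \geq m'$. The key point is that when $f_i = 0$ for $i > N$, the only variables $\pi_{(i,j)}$ that actually occur in $C^*(\pi,s)$ have $i \leq N$, so that the evaluation map $\ev_{\chi}$ sends each such $\pi_{(i,j)}$ to $\pi_i(\pi_{\chi})$, whose $\pi_{\chi}$-adic valuation is exactly $p^i \leq p^N$ times $v_{\pi_{\chi}}(\pi_{\chi})$ by Lemma \ref{200} (using $m_{\chi} > N \geq i$, so $\pi_i(\pi_{\chi}) \neq 0$). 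More precisely, one checks that $v_{\pi_{\chi}}(\ev_{\chi}(\pi_{(i,j)})) = p^i = v(\pi_{(i,j)}) + \Delta_{(i,j)}$ on the nose, and hence there is a controlled, $\chi$-independent relationship between the $\pi$-adic valuation $v(b_k)$ of a coefficient $b_k$ of $C^*(\pi,s)$ and the $\pi_{\chi}$-adic valuation $v_{\pi_{\chi}}(\ev_{\chi}(b_k))$.

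First I would make precise the statement that $C^*(\pi,s)$ only involves the finitely many variables $\pi_{(i,j)}$ with $i \leq N$; this is immediate from the definition of $S^*(k,\pi)$ and of $E_f(X)$, since $a_{ij} = 0$ for $i > N$. Next, by Proposition \ref{ho2}, the $\pi$-adic Newton polygon of $C^*(\pi,s)$ passes through the vertices $(k, \frac{a(p-1)k(k-1)}{2\delta})$ for $k \equiv 0, 1 \bmod d_m$, and by Lemma \ref{bobo} it differs from these values by at most $W$ for every $k$. The same statements hold for the $\pi_{\chi}$-adic Newton polygon of $C^*(\chi,s)$ for every $\chi$ with $m_{\chi} > m$. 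So the shape of both Newton polygons is already pinned down to within a band of width $W$ anchored at the same vertices. The remaining freedom is the behaviour of $v(b_k)$ (resp. $v_{\pi_{\chi}}(\ev_{\chi}(b_k))$) between consecutive anchor points $k$.

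The heart of the argument is to show $v_{\pi_{\chi}}(\ev_{\chi}(b_k)) = v(b_k)$ for all $k$ once $m_{\chi} \geq m'$, which by Lemma \ref{yt} immediately finishes the proof. By Lemma \ref{lola} we always have $v_{\pi_{\chi}}(\ev_{\chi}(b_k)) \geq v(b_k)$, with equality governed by whether some lowest-valuation monomial of $b_k$ is a pure power of $\pi_{(m,d_m)}$ with a unit coefficient and $m_{\chi} > m$. The claim is: if $v(b_k) \neq v_{\pi_{\chi}}(\ev_{\chi}(b_k))$, then the true value $v_{\pi_{\chi}}(\ev_{\chi}(b_k))$ must still lie below the upper bound $\frac{a(p-1)k(k-1)}{2\delta}$, but the only monomials in $b_k$ whose $\ev_{\chi}$-valuation is strictly smaller than the gap $\Delta$-corrected value are those built from $\pi_{(i,j)}$ with $i \leq N$, whose $\ev_{\chi}$-valuation is $\chi$-independent (equal to $\sum u(x) p^{i(x)}$). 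Combining: the total $\pi_{\chi}$-adic valuation of $\ev_{\chi}(b_k)$ equals $v(b_k) + \sum_x u(x)\Delta_x$ for the optimal monomial, and since $\Delta_{(i,j)} \leq p^i \leq p^N$ this correction term is at most $\frac{p^N}{p-1}$ times the total weight; the choice $m' = 1 + \lceil \log_p(\frac{p^N + W}{p-1}) \rceil$ is exactly what is needed so that $\frac{p^{m'-1} \cdot (p-1)}{p^N}$ — the $\pi_{\chi}$-adic valuation of $q$ over the maximal possible $\Delta$-correction — exceeds $W$, forcing any monomial with a nonzero $\Delta$-correction to be pushed above the entire width-$W$ band, hence irrelevant to the Newton polygon. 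Therefore the $\pi_{\chi}$-adic Newton polygon of $C^*(\chi,s)$ coincides with the $\pi$-adic Newton polygon of $C^*(\pi,s)$ for all such $\chi$, which is the desired independence.

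\textbf{Main obstacle.} The delicate step is the bookkeeping in the last paragraph: verifying that the numerical choice of $m'$ really does guarantee that every monomial carrying a strictly positive $\Delta$-correction gets pushed out of the relevant width-$W$ band, so that only the $\chi$-independent part of the Newton polygon survives. This requires carefully tracking how $v_{\pi_{\chi}}(q) = a(p-1)p^{m_{\chi}-1}$ scales against the maximal $\Delta$-correction $p^N$ per unit of $v$-weight, and checking the inequality $\frac{a(p-1)p^{m'-1}}{p^N} > W$ (equivalently $p^{m'-1} > \frac{p^N W}{a(p-1)}$, and after normalizing by dividing through, $p^{m'-1}(p-1) \geq p^N + W$) holds with the stated $m'$ — essentially Lemma \ref{bobo} plus the definition of $W$ supply exactly the constant $W$ that appears in the logarithm, and the argument is modeled on \cite[Theorem 1.2]{WAN7}, so the structure is known but the constant must be matched exactly.
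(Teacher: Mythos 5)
Your frame is right in outline (reduce via Lemma \ref{yt} to showing the $\pi_{\chi}$-adic Newton polygon of $C^*(\chi,s)$ is independent of $\chi$ for $m_{\chi}\geq m'$, use the width-$W$ band from Proposition \ref{ho2} and Lemma \ref{bobo}, and exploit that only the finitely many variables $\pi_{(i,j)}$ with $i\leq N$ occur), but the core mechanism you propose is not correct. Your central claim --- that $v_{\pi_{\chi}}(\ev_{\chi}(b_k))=v(b_k)$ for all $k$ once $m_{\chi}\geq m'$, so that the $\pi_{\chi}$-adic polygon coincides with the $\pi$-adic polygon of $C^*(\pi,s)$ --- is false in general: by Lemma \ref{lola}, equality at a coefficient forces a lowest monomial to be a unit times a pure power of $\pi_{(m,d_m)}$, which typical $b_k$ do not satisfy; since $v_{\pi_{\chi}}(\ev_{\chi}(\pi_{(i,j)}))=p^i=v(\pi_{(i,j)})+\Delta_{(i,j)}$ exactly (Lemma \ref{200}), monomials built from $\pi_{(i,j)}$ with $0<\Delta_{(i,j)}<W$ acquire a strictly positive shift. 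Moreover, your explanation of the role of $m'$ --- that large $m_{\chi}$ ``pushes any monomial with a nonzero $\Delta$-correction above the band'' --- cannot work: for $i\leq N<m_{\chi}$ the value $v_{\pi_{\chi}}(\ev_{\chi}(\pi_{(i,j)}))=p^i$ does not change as $m_{\chi}$ grows, so enlarging $m_{\chi}$ moves nothing; such monomials with $\Delta_{(i,j)}<W$ genuinely can contribute to the polygon (this is exactly the content of Theorem \ref{om2}), and what must be shown is that their contribution is $\chi$-independent, not that it is absent. The proposition asserts only $\chi$-independence, not coincidence with the $\pi$-adic polygon.

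What the threshold $m'=1+\lceil\log_p((p^N+W)/(p-1))\rceil$ actually buys is $v_{\pi_{\chi}}(p)=(p-1)p^{m_{\chi}-1}\geq p^N+W$, and it is used on the \emph{$p$-divisible error terms}, via Lemma \ref{summer}: write $\pi_i(T)=T^{p^i}h_i(T)+pg_i(T)$ for $i\leq N$. Then for any monomial $\pi^u$ supported on $i\leq N$ one gets $\ev_{\chi}(\pi^u)=\pi_{\chi}^{w(u)}h_u(\pi_{\chi})+(\textrm{terms of valuation}\geq w(u)+W)$, with $w(u)=\sum_{(i,j)}p^i u((i,j))$ and $h_u\in\Z_p[[T]]$ independent of $\chi$. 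Summing over the monomials of $b_k$ (which all satisfy $w(u)\geq t_k$, the Hodge bound) gives $\ev_{\chi}(b_k)=\sum_{j\geq t_k}g_{k,j}\pi_{\chi}^j$ with $g_{k,j}\in\Z_p$ independent of $\chi$ for $t_k\leq j<t_k+W$; the least such $j$ with $g_{k,j}\in\Z_p^*$, if it exists, equals $v_{\pi_{\chi}}(\ev_{\chi}(b_k))$ because the terms with $g_{k,j}\in p\Z_p$ have valuation at least $t_k+v_{\pi_{\chi}}(p)>t_k+W$, above the band; if no such $j$ exists the coefficient is irrelevant to the polygon. This bookkeeping is also where cancellation between distinct monomials of equal evaluated weight is handled --- an issue your ``optimal monomial'' reasoning ignores entirely: the valuation of a sum is not read off from a single minimal monomial, and a priori whether cancellation occurs could depend on $\chi$; the $\chi$-independence of the integers $g_{k,j}$ in the window $[t_k,t_k+W)$ is exactly what rules this out. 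As written, your argument therefore has a genuine gap at its central step, even though the surrounding scaffolding (Lemma \ref{yt}, the band of width $W$, restriction to $i\leq N$) matches the paper's.
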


\begin{proof}
 If $W=0$ then $d_m=1$ and $m=0$ (the converse is also true).  
The upper and lower bounds in Lemma \ref{bobo}
then completely coincide, so $\mathcal{P}$ is $1$-slope stable. For the rest of the proof we assume $W>0$.

Let $\chi: \Z_p \to \C_p^*$ be a character with $m_{\chi} \geq m'>m$.  
We have 
$
\pi_i(T) = T^{p^i} h_i + p g_i
$
with $h_i \in 1 + T\Z_p[[T]]$ and $g_i \in \Z_p[[T]]$ by Lemma \ref{summer}. 
For $u \in U$, as defined in Section \ref{543}, we set $w(u)=\sum_{(i,j) \in \mathfrak{X}} p^i \cdot u((i,j))
\geq v(\pi^u)$. 
By the choice of $m'$ one has for $i=0,1,\ldots,N$
\begin{align*}
v_{\pi_{\chi}}(p)= \varphi(p^{m_{\chi}}) \geq  p^N+W \geq p^i + W.  
\end{align*}
Hence for $u \in U$ with $u((i,j))=0$ if $i>N$ one finds
\begin{align*} \label{913}
\mathrm{ev}_{\chi} (\pi^u) = \pi_{\chi}^{w(u)}h_u(\pi_{\chi})+ \pi_{\chi}^{w(u)+\pceil{W}} z_u
\end{align*}
where $h_u \in \Z_p[[T]]$ does not depend on $\chi$ and $z_u \in \Z_p[\pi_{\chi}]$.  
For $i \in \Z_{\geq 0}$ set $t_i=\lceil \frac{a(p-1)i(i-1)}{2\delta} \rceil$ and write $C^*(\pi,s)=\sum_{i\ge 0} b_i s^i$ where
\begin{align*}
b_i = \sum_{u \in U:\  w(u) \geq t_i} s_u \pi^u
\end{align*}
with $s_u \in \Z_p$ (lower bound in Proposition \ref{ho2}). We then find
\begin{align*}
\mathrm{ev}_{\chi} (b_i )  =& \sum_{u \in U:\  w(u) \geq t_i} s_u \left(  \pi_{\chi}^{w(u)}h_u(\pi_{\chi})+ \pi_{\chi}^{w(u)+\pceil{W}} z_u \right) =  
\sum_{j\geq t_i}g_{i,j}\pi_\chi^j
\end{align*}
for some $g_{i,j}\in\Z_p$. Notice that 
$g_{i,j}$ is independent of $\chi$ for all $t_i\leq j <  t_i+W$.
Let $j^*$ be the minimal of all $j<t_i+W$ such that $g_{i,j}\in\Z_p^*$. Assume first that $j^*$ exists. Notice that $j^*$ does not depend on $\chi$.
For $t_i\leq j<j^*$ we have $v_{\pi_\chi}(g_{i,j}\pi_\chi^j)\geq t_i + v_{\pi_\chi}(p)>t_i+W>j^*$ by hypothesis and we find  $v_{\pi_\chi}(\ev_{\chi}(b_i)) = j^*$. If such $j^*$ does not exist,
we have $v_{\pi_\chi}(\ev_\chi(b_i))\geq t_i+ v_{\pi_\chi}(p)>t_i+W$, which lies above our upper bound of the Newton polygon in Proposition \ref{ho2} and Lemma \ref{bobo}. Hence such $b_i$'s do not affect our Newton polygon. This shows that the Newton polygon of $C^*(\chi,s)$ is independent of $\chi$.
\end{proof}

We will now study how much the Newton polygon of $C^*(\chi,s)$ depends on the coefficients $a_{ij}$ that defines the $\Z_p$-cover $\mathcal{P}$. 

\begin{theorem} \label{om2}
Let $\mathcal{P}$ and $\mathcal{P}'$ be two genus stable $\Z_p$-covers of $\Ps^1_k$ with $\delta=\delta(\mathcal{P})=\delta(\mathcal{P}')=\frac{d_m}{p^m}$ for some $m \in \Z_{\geq 0}$, defined by $(a_{ij})_{i,j}$ and $(a'_{ij})_{i,j}$ (respectively) as in Section \ref{ha4}. Let $\chi: \Z_p\to \C_p^*$ be a finite character with $m_{\chi} >m$. 
If $a_{ij}=a'_{ij}$ for all $(i,j) \in \{ x \in \mathfrak{X}: \Delta_x< W \}$,
then the $\pi_\chi$-adic Newton polygons of $C^*(\chi,s)$ for these two covers are identical. 
\end{theorem}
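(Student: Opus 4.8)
The plan is to show that the difference between the two $\pi$-adic $C^*$-functions, evaluated at $\chi$, is small enough in the $v$-valuation that it cannot affect the Newton polygon, which by Proposition \ref{ho2} and Lemma \ref{bobo} is trapped in a vertical band of height at most $W$. First I would set up notation: write $C^*(\pi,s)=\sum_k b_k s^k$ and $C'^*(\pi,s)=\sum_k b'_k s^k$ for the two covers, and recall from the proof of Proposition \ref{om} that each $b_k$ is a $\Z_p$-combination $\sum_{u:\,w(u)\ge t_k}s_u\pi^u$ with $t_k=\lceil a(p-1)k(k-1)/(2\delta)\rceil$, and likewise for $b'_k$. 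The key observation is that the monomials $\pi^u$ appearing in $b_k$ depend on the $a_{ij}$ only through $E_f(X)=\prod_{x\in\mathfrak{X}}E(\pi_x[a_{ij}]X^j)$, i.e. through the finite data $([a_{ij}])_{(i,j)\in\mathfrak{X}}$, and the coefficient of $\pi^u$ in any $b_k$ is a universal polynomial expression in those Teichm\"uller lifts.

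Second, I would make precise which variables $\pi_x$ can contribute \emph{low-valuation} monomials. By Lemma \ref{540}, $\Delta_x = p^i - j/\delta \ge 0$ with equality only at $x=(m,d_m)$, and $v(\pi_x)=j/\delta = p^i-\Delta_x$. For a monomial $\pi^u$ with $w(u)=\sum_x p^i u(x)\ge t_k$ but $v(\pi^u)=\sum_x (p^i-\Delta_x)u(x)$ small (below the upper bound $t_k+W$ of the band), one needs $\sum_x \Delta_x u(x)$ to be correspondingly large; and any variable $\pi_x$ with $\Delta_x \ge W$ that actually occurs in $\pi^u$ (i.e. $u(x)\ge 1$) already pushes $v(\pi^u)$ down by at least $\Delta_x\ge W$ relative to $w(u)$, forcing $v(\pi^u)\le w(u)-W$... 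I need to run this the right direction: the point is rather that for the \emph{relevant} part of $C^*(\pi,s)$ — the monomials whose valuation lies within $W$ of the lower bound — only variables $\pi_x$ with $\Delta_x < W$ can appear. Concretely, I would argue: if $u(x)\ge 1$ for some $x$ with $\Delta_x\ge W$, then $v(\pi^u) = w(u) - \sum_x \Delta_x u(x) \le w(u) - W \le $ (value at argument $k$ on the lower bound) would be needed, but combined with $w(u)\ge t_k$ one checks such monomials lie strictly below the lower bound polygon, hence cannot occur at all (their coefficient in $C^*(\pi,s)$ vanishes), OR they lie so low that... Here I must be careful: actually monomials with any variable are allowed, but I claim the coefficient $b_k$, \emph{modulo terms of $v$-valuation $\ge t_k + W$}, only involves the variables $\{\pi_x : \Delta_x < W\}$, because any monomial $\pi^u$ using a variable with $\Delta_x \ge W$ has $v(\pi^u) = w(u) - \sum \Delta_x u(x)$; to have $v(\pi^u) < t_k + W$ while $w(u)\ge t_k$ is possible, so this naive count fails — instead the correct statement is that such a monomial, when evaluated at $\chi$ via $\ev_\chi$, gets an extra boost of $\Delta_x$ per occurrence (Lemma \ref{lola}: $v_{\pi_\chi}(\ev_\chi(\pi_x))\ge v(\pi_x)+\Delta_x$), so $v_{\pi_\chi}(\ev_\chi(\pi^u)) \ge v(\pi^u) + \sum_x\Delta_x u(x) \ge t_k + W$ whenever some $\Delta_x\ge W$ occurs.

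That last inequality is the heart of the argument, so third I would state it cleanly as: \emph{if $\pi^u$ is a monomial occurring in $b_k$ and $u(x)\ge 1$ for some $x\in\mathfrak{X}$ with $\Delta_x\ge W$, then $v_{\pi_\chi}(\ev_\chi(\pi^u)) \ge t_k + W$}, using $w(u)\ge t_k$, $v(\pi^u) = w(u)-\sum_x\Delta_x u(x)$, and the per-variable boost $v_{\pi_\chi}(\ev_\chi(\pi_x))\ge v(\pi_x)+\Delta_x$ from (the proof of) Lemma \ref{lola}. Consequently $\ev_\chi(b_k)$ and $\ev_\chi(b'_k)$ agree modulo $\pi_\chi$-adic valuation $\ge t_k + W$: the monomials using only variables $\pi_x$ with $\Delta_x<W$ are built from the Teichm\"uller lifts $[a_{ij}]$ with $(i,j)$ in $\{x:\Delta_x<W\}$, which by hypothesis coincide for $\mathcal{P}$ and $\mathcal{P}'$, so those parts of $b_k$ and $b'_k$ are literally equal as elements of $R$; the remaining monomials contribute $v_{\pi_\chi}(\ev_\chi(\cdot))\ge t_k+W$ by the boost estimate. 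Finally, since by Lemma \ref{bobo} the $\pi_\chi$-adic Newton polygon of $C^*(\chi,s)$ lies in the band between the polygon $(k, t_k)$ (up to the rounding, the polygon through $(k,\frac{a(p-1)k(k-1)}{2\delta})$) and its translate up by $W$, and we have just shown $v_{\pi_\chi}(\ev_\chi(b_k) - \ev_\chi(b'_k)) \ge t_k + W$ for every $k$, adding such a perturbation to $C^*(\chi,s)$ changes no vertex lying within the band; hence the two Newton polygons coincide. The main obstacle I anticipate is the bookkeeping in step three: pinning down that the per-occurrence boost $\sum_x \Delta_x u(x)$ really does accumulate additively under $\ev_\chi$ applied to a product $\pi^u$ (which follows from $\ev_\chi$ being a ring homomorphism and $v_{\pi_\chi}$ being a valuation, together with Lemma \ref{200} and Lemma \ref{540} as packaged in Lemma \ref{lola}), and handling the $\lceil\cdot\rceil$-rounding and the distinction between the two bounds in Proposition \ref{ho2} (which pin down exact vertices at $k\equiv 0,1\bmod d_m$) so that "changes no vertex in the band" is literally correct — this is essentially the same level of care as in \cite[Lemma 3.7, Theorem 1.2]{WAN7}, and I would invoke those arguments where the computations are routine.
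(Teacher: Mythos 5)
Your proposal is correct and follows essentially the same route as the paper: the key step in both is that for any variable $\pi_{(i,j)}$ with $\Delta_{(i,j)}\geq W$ one has $v_{\pi_\chi}(\ev_\chi(\pi_{(i,j)}))\geq v(\pi_{(i,j)})+W$, so monomials involving such variables are pushed on or above the upper bound of Proposition \ref{ho2} (via Lemma \ref{bobo}) and cannot affect the $\pi_\chi$-adic Newton polygon, while the remaining monomials depend only on the $a_{ij}$ with $\Delta_{(i,j)}<W$, which agree for the two covers. Your version just makes explicit the monomial bookkeeping that the paper leaves implicit (note only the unrounded bound $\frac{a(p-1)k(k-1)}{2\delta}+W$, rather than $t_k+W$, is directly justified, which suffices).
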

\begin{proof}
The upper and lower bounds of the $\pi$-adic Newton polygon of $C^*(\pi,s)$ and the $\pi_{\chi}$-adic Newton polygon of $C^*(\chi,s)$ differ by at most $W$ by Lemma \ref{bobo}. If $(i,j) \in \mathfrak{X}$ with $\Delta_{(i,j)} \geq W$, one has
\begin{align*}
v_{\pi_\chi}(\mathrm{ev}_{\chi} ( \pi_{(i,j)} )) = v_{\pi_{\chi}}(\pi_i(\pi_{\chi})) \geq p^i= v(\pi_{(i,j)})+ \Delta_{(i,j)} \geq v(\pi_{(i,j)}) + W.
\end{align*}
Hence terms in $C^*(\pi,s)$ involving such $\pi_{(i,j)}$ contribute to points above the upper bound of the $\pi_{\chi}$-adic Newton polygon of $C^*(\chi,s)$ (Proposition \ref{ho2}, Lemma \ref{bobo}). Hence the corresponding coefficient $a_{ij}$ giving rise to $\pi_{(i,j)}$ are irrelevant.
\end{proof}

\begin{theorem} \label{jolo} 
Assume $W>0$. If 
\begin{align*}
N=\max \{ i: \exists (i,j) \in \mathfrak{X} \textrm{ with } a_{ij} \neq 0 \textrm{ and } \Delta_{(i,j)}<W  \}
\end{align*}
exists,  then $\mathcal{P}$ is $m'$-slope stable  for 
$m'=1 + \lceil \log_p (\frac{p^N+W}{p-1})\rceil >m$.
\end{theorem}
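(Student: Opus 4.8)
The plan is to reduce Theorem \ref{jolo} to Proposition \ref{om} by replacing the given cover $\mathcal{P}$ with a truncated cover whose defining polynomials vanish above index $N$, and then invoking Theorem \ref{om2} to see that this truncation does not change the relevant Newton polygons. Concretely, I would define $\mathcal{P}'$ to be the $\Z_p$-cover given by the coefficients $a'_{ij}$ where $a'_{ij}=a_{ij}$ if $i\leq N$ and $a'_{ij}=0$ if $i>N$. The first thing to check is that $\mathcal{P}'$ is still genus stable with the \emph{same} $\delta = d_m/p^m$. This is where the choice of $N$ enters: the pair $(m,d_m)$ realizing the maximum has $\Delta_{(m,d_m)}=0<W$ by Lemma \ref{540}, so $a_{m,d_m}\neq 0$ forces $m\leq N$; hence truncating at $N$ keeps the term achieving the maximum $\delta$, and all other terms still satisfy $d_i/p^i\leq\delta$, so $\delta(\mathcal{P}')=\delta(\mathcal{P})$ and the distinguished index $m$ is unchanged. (One should also note $m\leq N$ gives $m'>m$ as claimed in the statement, since $p^{m'-1}\geq (p^N+W)/(p-1)>p^N/(p-1)\geq p^N/p^{?}$— more simply $p^{m'-1}=p^{\lceil\log_p((p^N+W)/(p-1))\rceil}\geq(p^N+W)/(p-1)>p^{N-1}$ when $W>0$, hence $m'>N\geq m$.)

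Next I would apply Theorem \ref{om2} to the pair $\mathcal{P}, \mathcal{P}'$: both are genus stable with the same $\delta=d_m/p^m$, and by construction $a_{ij}=a'_{ij}$ whenever $i\leq N$; in particular they agree on all $(i,j)\in\mathfrak{X}$ with $\Delta_{(i,j)}<W$, because any such pair has $i\leq N$ by the very definition of $N$ (if there were a pair with $\Delta_{(i,j)}<W$, $a_{ij}\neq 0$ and $i>N$, that would contradict maximality of $N$; and if $a_{ij}=0$ then trivially $a_{ij}=a'_{ij}=0$). Therefore, for every finite character $\chi$ with $m_\chi>m$, the $\pi_\chi$-adic Newton polygon of $C^*(\chi,s)$ is the same for $\mathcal{P}$ and for $\mathcal{P}'$.

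Finally, $\mathcal{P}'$ satisfies the hypothesis of Proposition \ref{om} with this same $N$ (its $f_i'=0$ for $i>N$), so $\mathcal{P}'$ is $m'$-slope stable with $m'=1+\lceil\log_p((p^N+W)/(p-1))\rceil$, and crucially the value of $W$ computed from $\mathcal{P}'$ equals the value computed from $\mathcal{P}$ since $W$ depends only on $a,p,d_m,\delta$, all of which coincide. By Lemma \ref{yt}, $m'$-slope stability of $\mathcal{P}'$ is equivalent to: $\mathcal{P}'$ is genus stable with $\delta=d_m/p^m$ for $m<m'$, together with the statement that the $\pi_\chi$-adic Newton polygon of $C^*(\chi,s)$ for $\mathcal{P}'$ is independent of $\chi$ for $m_\chi\geq m'$. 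Since that Newton polygon agrees with the one for $\mathcal{P}$ (by the previous paragraph, noting $m'>m$ so $m_\chi\geq m'$ implies $m_\chi>m$), and $\mathcal{P}$ is also genus stable with the same invariants, Lemma \ref{yt} applied in the reverse direction gives that $\mathcal{P}$ is $m'$-slope stable. The only mild subtlety — and the step I would be most careful about — is making sure all the numerical invariants ($\delta$, $m$, $d_m$, $W$, and the threshold $m'$) are genuinely identical for $\mathcal{P}$ and its truncation, so that the two applications of Lemma \ref{yt} are about the same $m'$; this is exactly what the choice of $N$ (keeping everything with $\Delta<W$, in particular $(m,d_m)$) guarantees.
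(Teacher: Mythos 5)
Your proposal is correct and follows essentially the same route as the paper: truncate the cover at level $N$, invoke Theorem \ref{om2} to see the $\pi_\chi$-adic Newton polygons of $C^*(\chi,s)$ are unchanged, and then apply Proposition \ref{om} (the paper leaves the transfer back to $\mathcal{P}$ via Lemma \ref{yt} and the preservation of $\delta$, $m$, $d_m$, $W$ implicit, which you spell out correctly).
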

\begin{proof}
Let $\chi: \Z_p \to \C_p^*$ be a character with  $m_{\chi} >m'$. Then by Theorem \ref{om2}, the $\pi_{\chi}$-adic Newton polygon of $C^*(\pi_{\chi},s)$ is equal to the one where we replace $a_{ij}$ by zero for $i>N$. Then we apply Proposition \ref{om}. 
\end{proof}

\begin{proof}[{\bf{Proof of Theorem B}}]
Set $d_i=\deg(f_i)= \max \left( \{j: a_{ij} \neq 0\} \cup \{-\infty\} \right)$. The condition in Theorem \ref{jolo} is equivalent to the following. There exists an integer $N$ such that for $i>N$ one has $p^i-\frac{d_i}{\delta} \geq W$, that is, 
\begin{align*}
d_i \leq (p^i-W) \delta = \delta p^i  - W \delta.
\end{align*}
The result follows.
\end{proof}

We want to remark that 
Theorem B generalizes the main theorem of \cite[Theorem 1.2]{WAN7}:
Assume that $f_i=0$ for $i \geq 1$. Then $\mathcal{P}$ is slope stable. In fact it also generalizes the following theorem of Li 
(see \cite{LIX}):
Assume that $\{\deg(f_i): i \in \Z_{\geq 0} \}$ is bounded. Then $\mathcal{P}$ is slope stable. 

\begin{remark} \label{consta}
The proof of Theorem B shows that in Theorem B one can take
\begin{align*}
C= \frac{a(p-1)(d_m-1)^2}{8}.
\end{align*}
\end{remark}

\begin{example} \label{522}
It is easy to construct genus stable $\Z_p$-towers for which Theorem B does not apply. For example take an integer $b>1$ with $p \nmid b$ and consider the tower defined by  
\begin{align*}
f= [X]^b + \sum_{i=1}^{\infty} p^i [X]^{bp^i-1}.
\end{align*}
One has $m=0$ and $\delta=b$. Assume that $W> \frac{1}{b}$, which can be achieved by taking $b$ large enough. For $i \geq 1$ one finds
\begin{align*}
\deg(f_i) = bp^i-1 = \delta p^i - \frac{\delta}{b} > \delta p^i - W \delta.
\end{align*}
Hence Theorem B is not able to tell if such cover is slope stable.
\end{example}

\begin{remark} \label{mok}
Let $\chi: \Z_p\to \C_p^*$ be a continuous character of infinite order. Such a character is completely determined by $\chi(1) \in \C_p^*$. Set $\pi_{\chi}=\chi(1)-1$ and assume that $v_p(\pi_{\chi})>0$. What can one say about the $\pi_{\chi}$-adic Newton polygon of  $\mathrm{ev}_{\chi} (C^*(\pi,s))$? 

It turns out that Lemma \ref{200} does not hold for $x \in \C_p$ with $v_p(x)>0$ and $x+1 \not \in \mu_{p^{\infty}}$. For such $x$ one can show that there are $C(x) \in \Q_{\geq 0}$ and $M(x) \in \Z_{\geq 0}$ such that
\begin{align*}
v_p(\pi_i(x))= v_p((x+1)^{p^i}-1) = 
\left\{  \begin{array}{ll} 
p^i v_p(x)  & \textrm{if }i< M(x) \\ 
C(x)+i & \textrm{if }i \ge M(x). \end{array} \right.
\end{align*}	
The main reason for this is the following. One has
$
(x+1)^{p}-1=\sum_{j=1}^{p-1}\binom{p}{j}x^j+x^{p}.
$
If the valuation of $x$ is small, then the valuation of $(x+1)^{p}-1$ is determined by the valuation of $x^p$. If the valuation of $x$ is large, the valuation is determined by the valuation of the term $px$. See \cite{KO16} for a more thorough treatment of the
$p$-th power maps. 

To $\pi_{\chi}$ we can associate $C$ and $M$ as above. The lower bound as in Proposition \ref{ho2} holds if for all $(i,j) \in \mathfrak{X}$ with $a_{ij} \neq 0$ one has 
\begin{align*}
 v_{\pi_{\chi}}( \mathrm{ev}_{\chi}(\pi_{(i,j)}))   = v_{\pi_{\chi}}(\pi_i(\pi_{\chi})) \geq v(\pi_{(i,j)}).
\end{align*}
This condition is automatically satisfied if $i<M$. For $i \geq M$, if $a_{ij}=0$, one needs 
\begin{align*}
(C+i) v_{\pi_{\chi}}(p) \geq v(\pi_{(i,j)})= \frac{j}{\delta}.
\end{align*}
In other words, for $i \geq M$ one needs 
\begin{align*}
\deg(f_i) \leq \delta (C+i) v_{\pi_{\chi}}(p).
\end{align*}
For the upper bound as in Proposition \ref{ho2} to hold, in addition we need
\begin{align*}
v_{\pi_{\chi}}( \mathrm{ev}_{\chi}(\pi_{(m,d_m)}) =v_{\pi_{\chi}}( \pi_{m} ( \pi_{\chi})) = p^m = v_{\pi}(\pi_{(m,d_m)}).
\end{align*}
The latter is equivalent to $m \leq M$. Hence both upper and lower bounds seem to hold for much smaller classes of towers. This gives us reason to believe that purely with $L^*(T,s)$ one can prove Theorem B only when $\deg(f_i)$ is at most linear in $i$. 
\end{remark}

\end{document}